\documentclass[a4papersize]{article}

\usepackage{amsmath,amsthm, amsxtra,amssymb,latexsym, amscd}
\usepackage[mathscr]{eucal}
\newtheorem{theorem}{Theorem}[section]
\newtheorem{corollary}[theorem]{Corollary}
\newtheorem{notation}[theorem]{Notation}
\newtheorem{lemma}[theorem]{Lemma}
\newtheorem{proposition}[theorem]{Proposition}
\newtheorem{example}[theorem]{Example}

\newtheorem{definition}[theorem]{Definition}
\newtheorem{remark}[theorem]{Remark}
\DeclareMathOperator{\nSCM}{nSCM}
\DeclareMathOperator{\depth}{depth}

\DeclareMathOperator{\Ann}{Ann}
\DeclareMathOperator{\docao}{ht}
\DeclareMathOperator{\Ass}{Ass}
\DeclareMathOperator{\Defi}{Defi}
\DeclareMathOperator{\Supp}{Supp}

\DeclareMathOperator{\Var}{Var}
\DeclareMathOperator{\nCM}{nCM}
\DeclareMathOperator{\Tor}{Tor}
\DeclareMathOperator{\f-depth}{f-depth}

\DeclareMathOperator{\Att}{Att}
\DeclareMathOperator{\E-depth}{E-depth}
\DeclareMathOperator{\Ef-depth}{Ef-depth}
\DeclareMathOperator{\Spec}{Spec}
\DeclareMathOperator{\p}{\frak p}

\begin{document}
\large
\centerline{\Large {\bf   ON THE PERTUBATIONS OF E-DEPTH, EF-DEPTH AND}}
\smallskip
\centerline{\Large {\bf    SEQUENTIALLY COHEN-MACAULAY LOCUS}}

\medskip
\vskip 0.7cm

\medskip

\vskip 0.7cm
\centerline { DOAN TRUNG CUONG}
\centerline {Institute of Mathematics}
\centerline{Vietnam Academy of Science and Technology, Hanoi, Vietnam}
\centerline {E-mail: dtcuong@math.ac.vn}
\vskip 0.4cm
\centerline { LE THANH NHAN}
\centerline {Institute of Mathematics}
\centerline{Vietnam Academy of Science and Technology, Hanoi, Vietnam}
\centerline {E-mail: nhanlt2014@gmail.com; lttnhan@math.ac.vn}
\vskip 0.4cm
\centerline { NGUYEN XUAN LINH}
\centerline {  Hanoi  University of Civil Engineering}
\centerline{Hanoi, Vietnam}
\centerline {E-mail: linhnx@huce.edu.vn}

\medskip
\vskip 0.7cm

\noindent{\bf Abstract} {\footnote{ {\it{Key words and phrases: }} Sequential sequence, sequential f-sequence, E-depth, Ef-depth, sequentially Cohen-Macaulay locus. \hfill\break
  {\it{2000 Subject  Classification: }}  13E05, 13C14, 13D45\\
   The work is supported by the Vietnam National Foundation for Science and Technology Development (Nafosted) under grant number 101.04-2023.31.}.  Let $(R, \frak m )$ be a Noetherian local ring, $I$ an ideal of $R$ and $M$ a finitely generated $R$-module.  In this paper, we define and study $\E-depth$ and $\Ef-depth$ of $M$ in $I$. We prove that $\E-depth$ (resp. $\Ef-depth$ under  mild conditions) of $M$ in $I$ is the common length of all maximal sequential sequences (resp. maximal sequential f-sequences) of $M$ in $I$. We show that $\E-depth$ and $\Ef-depth$ do not decrease under small pertubations.  We describe the non sequentially Cohen-Macaulay locus $\nSCM(M)$ of $M$ in terms of support and non Cohen-Macaulay locus of deficiency modules of $M$. Using this description, we prove that if $x_1, \ldots , x_r$ is a sequential f-sequence then there exists an integer $N>0$ such that 
$$\dim\nSCM(M/(x_1+\epsilon_1, \ldots , x_r+\epsilon_r)M)\leq \dim\nSCM(M/(x_1, \ldots , x_r)M)$$ for all $\epsilon_1, \ldots , \epsilon_r\in\frak m^N$.       

\section{Introduction}

We first fix some notations and assumptions.

\begin{notation} \label{N:1} {\rm Throughout this paper, let $(R,\frak m)$ be  a Noetherian local ring, $M$  a finitely generated $R$-module of dimension $d$. Let $\widehat R$ and $\widehat M$ denote the $\frak m$-adic completion of $R$ and $M$, respectively. For an ideal   $I= (x_1,\ldots , x_n)$ of $R$  and a $n$-tuple $\underline {\epsilon}=(\epsilon_1, \ldots , \epsilon_n)$ of elements in $\frak m$, we set $I(\underline {\epsilon})= (x_1+\epsilon_1,\ldots , x_n+\epsilon_n).$ If $\epsilon_1,\ldots , \epsilon_n\in\frak m^N$ for $N\gg 0$ then  $I(\underline {\epsilon})$ is called a  {\it small perturbation} of the ideal $I$. If $R$ is a quotient of a Gorenstein local ring, denote by $K^i(M)$  the $i$-th deficiency module of $M$ for $i\in\Bbb N$. Set $\Defi(M)=\{i\geq 1\mid K^i(\widehat M)\neq 0\}$.}
\end{notation}

It is well-known that $M$ is Cohen-Macaulay if and only if there exists a system of parameters (s.o.p for short) of $M$ that is an $M$-sequence.  The notion of filter regular sequence (f-sequence for short) was introduced by N. T. Cuong, P. Schenzel, N. V. Trung \cite{CST} as a natural extension of the regular sequence concept.   If $R$ is a quotient of a Cohen-Macaulay local ring then  $M$ is generalized Cohen-Macaulay if and only if  each s.o.p of $M$ is an f-sequence, cf. \cite{CST}. The class of sequentially Cohen-Macaulay modules is an important extension of the class of Cohen-Macaulay modules, which was introduced by R. Stanley 1996 (in graded setting) and P. Schenzel 1998 (in local setting), and is mainly devoted to studying the modules that are not necessarily unmixed.  The class of  sequentially generalized Cohen-Macaulay modules was defined in \cite{CN} in a natural way.   Let $H^0_{\frak m}(M)=D_t\subset \ldots \subset D_1\subset D_0=M$ be the dimension filtration of $M$, i.e. each $D_{i+1}$ is the largest submodule of $M$ of dimension less than $\dim_R(D_i)$. We say that  $M$  is {\it sequentially Cohen-Macaulay} (resp. {\it sequentially generalized Cohen-Macaulay})  if each quotient  $D_i/D_{i+1}$ is  Cohen-Macaulay (resp. generalized Cohen-Macaulay). Sequentially  Cohen-Macaulay modules and sequential generalized Cohen-Macaulay modules have been studied extensively in the literature from various aspects, see \cite{St, Sch, HS, CN, CC, NDC, CSt, CSSS, LN}. 

Let us mention that for an $M$-regular element $x$, P. Schenzel \cite[Theorem 4.7]{Sch} proved that the property of being a sequentially  Cohen-Macaulay module on $M$ is inherited by $M/xM$, however in general the sequential Cohen-Macaulayness does not lift from $M/xM$ to $M$, see Example \ref{E:2}(b).  Therefore, the notions of sequential sequence and sequential f-sequence (see Definition \ref{D:1}) were introduced  with the purpose of establishing an analogue for the sequentially Cohen-Macaulay modules and sequentially generalized Cohen-Macaulay modules of the above well-known characterizations of Cohen-Macaulay modules and generalized Cohen Macaulay modules, where the roles of regular sequences and filter regular sequences are respectively replaced by that of sequential sequences and sequential f-sequences, see \cite[Theorems 1.2, 1.3]{LN}. It is important that $M$  is sequentially  Cohen-Macaulay if and only if so is $M/xM$ for any sequential element $x$ of $M$, see Lemma \ref{L:1a}(a).    
    
On the other hand, the question of which properties and numerical invariants are preserved under small perturbations has attracted the interest among mathematicians, see \cite{Sa, Ei, CS, ST, HT, MS, MQS, QT, QTr}. This question is related to the problem of approximating analytic singularities by algebraic singularities. D. Eisenbud \cite{Ei} studied homology of complexes under a pertubation and showed that Euler characteristic and depth can be preserved. There are some cases in which the rings $R/I(\underline {\epsilon})$ and $R/I$ are
isomorphic for all sufficiently small pertubations $I(\underline {\epsilon})$ of $I$, see \cite{Sa, CS, MS}. The behaviour under perturbation of Hilbert-Samuel functions has been investigated in \cite{ST, MQS, QT, QTr}. If $x_1, \ldots , x_r$ is an f-sequence and $R$ is an equidimensional quotient of a Cohen-Macaulay local ring then there exists an integer $N>0$ such that there is an inequality on dimensions of non Cohen-Macaulay loci 
$$\dim\nCM(R/(x_1+\epsilon_1, \ldots , x_r+\epsilon_r))\leq \dim\nCM(R/(x_1, \ldots , x_r))$$  
 for all $\epsilon_1, \ldots , \epsilon_r\in\frak m^N$, cf. \cite[Theorem 4.6]{MQS}.

In this paper,  we define $\E-depth_R(I,M)$ and $\Ef-depth_R(I,M)$ of $M$ in  $I$ in terms of  the depth and f-depth of the deficiency modules $K^i(\widehat M)$ in $I$ respectively (Definition \ref{D:2}). We show that  $\E-depth_R(I,M)$ is the common length of all maximal sequential sequences of $M$ in $I$; and if $\dim_R(M/IM)\geq 2$ then  $\Ef-depth_R(I,M)$ is a finite number which is equal to the common length of all maximal sequential f-sequences of $M$ in $I$. We prove that $\E-depth$ and $\Ef-depth$ of $M$ in suficiently small pertubations $I(\underline {\epsilon})$ of $I$ are  at least $\E-depth_R(I,M)$ and $\Ef-depth_R(I,M)$, respectively (Theorem \ref{T:1bs}). We describe the non sequentially Cohen-Macaulay locus $\nSCM(M)$ of $M$ in terms of the support and the non Cohen-Macaulay locus of deficiency modules of $M$ (Theorem \ref{T:1}). Using this description, we prove that if $x_1, \ldots , x_r$ is a sequential f-sequence of $M$, then there exists an integer $N>0$ such that 
$$\dim\nSCM(M/(x_1+\epsilon_1, \ldots , x_r+\epsilon_r)M)\leq \dim\nSCM(M/(x_1, \ldots , x_r)M)$$ for all $\epsilon_1, \ldots , \epsilon_r\in\frak m^N$ (Theorem \ref{T:2}). We also give some examples to clarify the results.

In the next section, after giving some preliminaries on sequential sequences and sequential f-sequences, we show that sequential sequences and sequential f-sequences are preserved under small pertubations, then we prove Theorem \ref{T:1bs}. The proofs of the main results, Theorems \ref{T:1}, \ref{T:2}, are presented in the last section. 

\section{Sequential sequences and sequential f-sequences}

    The notions of sequential sequence and sequential f-sequence were introduced in \cite{LN} in order to characterize  the sequential Cohen-Macaulay modules and sequential generalized Cohen-Macaulay modules. These notions were defined in terms of attached primes of Artinian local cohomology modules $H^i_{\frak m}(M)$. Following I. G. Macdonald \cite{Mac}, every Artinian $R$-module $A$ has a minimal secondary representation $A=A_1+\ldots +A_n$, where $A_i$ is $\frak p_i$-secondary,  $A_i$ is not redundant and $\frak p_i\neq \frak p_j$ for all $i\neq j.$ The set $\{\frak p_1, \ldots , \frak p_n\}$ is independent of the  choice of minimal secondary representation of $A$.  This set is called the  set of {\it attached primes} of $A$, denoted by $\Att_RA$.

  \begin{definition} \label{D:1} 
  	{\rm (a)} {\rm  An element $x\in \frak m$ is called an {\it $M$-sequential element} if $\dim_R(M)>0$ and $x\notin\frak p$ for all $ \frak p\in\bigcup_{i=1}^d\Att_RH^i_{\frak m}(M)$. A sequence  $x_1, \ldots , x_r$ of elements in $\frak m$ is called an {\it $M$-sequential sequence}  if $x_i$ is an $M/(x_1, \ldots , x_{i-1})M$-sequential element for all $i\leq r$}.
  	
  	{\rm (b)} {\rm An element $x\in \frak m$ is said to be an {\it $M$-sequential f-element}  if $x\notin\frak p$ for all prime ideals $ \frak p\in\bigcup_{i=2}^d\Att_RH^i_{\frak m}(M)$ with $\dim (R/\frak p)>0$. A sequence $x_1, \ldots , x_r$ of elements in $\frak m$ is called an {\it $M$-sequential f-sequence} if $x_i$ is an $M/(x_1, \ldots , x_{i-1})M$-sequential f-element for all $i\leq r$}.
  \end{definition}
  
  We need recall some basic properties for attached primes of Artinian modules that will be used later, see \cite[8.2.4, 8.2.5]{BS}, \cite[Corollary 4.2]{CNN}. For an ideal $I$ of $R$, denote by ${\Var}(I)$ the set of all prime ideals of $R$ containing $I$.  

\begin{lemma}\label{L:1} Let $A$ be an Artinian $R$-module.   Then 
\begin{itemize}
\item[{\rm (a)}]  $\min \Att_RA=\min {\Var} (\Ann_RA).$ 
\item[{\rm (b)}] $A$ has a natural structure as an Artinian $\widehat R$-module and $$\Att_RA=\{\frak P\cap R\mid \frak P\in \Att_{\widehat R}A\}.$$
\item[{\rm (c)}] Let $i\in\Bbb N$. If $\frak p\in\Ass_R(M)$ with $\dim (R/\frak p)=i$ then $\frak p\in\Att_RH^i_{\frak m}(M)$.
\item[{\rm (d)}] Let $R$ be a quotient of a Cohen-Macaulay local ring. Let $i\in\Bbb N$ and $\frak p\in \Att_RH^i_{\frak m}(M).$ Then $\dim (R/\frak p)\leq i.$ Moreover, $\dim (R/\frak p)=i$ if and only if $\frak p\in\Ass_R(M).$  
\end{itemize}
\end{lemma}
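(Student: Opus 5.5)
We do not build new machinery here. Lemma \ref{L:1} gathers standard facts on attached primes, and the plan is to derive each item from Macdonald's secondary representation theory together with local duality, citing \cite[8.2.4, 8.2.5]{BS} and \cite[Corollary 4.2]{CNN} for the less elementary points.

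For (a), take a minimal secondary representation $A=A_1+\cdots+A_n$ with $A_i$ $\frak p_i$-secondary. Since $\sqrt{\Ann_R A_i}=\frak p_i$, we get $\sqrt{\Ann_RA}=\bigcap_i \frak p_i$. Hence the minimal primes containing $\Ann_RA$ are exactly the minimal members of $\{\frak p_1,\dots,\frak p_n\}=\Att_RA$.

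For (b), we first note that $A$ is Artinian, so each element is killed by some power of $\frak m$; thus $A$ is naturally an $\widehat R$-module, and its $\widehat R$-submodules coincide with its $R$-submodules. Next let $A=\sum A_i$ be a minimal secondary representation over $\widehat R$ with $A_i$ $\frak P_i$-secondary. Each $A_i$ is then $(\frak P_i\cap R)$-secondary over $R$, since for $x\in R$ multiplication by $x$ is nilpotent or surjective on $A_i$ according as $x\in\frak P_i$ or not. Grouping the summands with equal contractions gives a secondary representation over $R$. Minimality survives because the grouped components are $\frak p$-secondary with distinct $\frak p$, and removing redundant terms only removes summands, never a prime. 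This yields $\Att_RA\subseteq\{\frak P\cap R\}$. For the reverse inclusion, I would use the characterization that $\frak p\in \Att_R A$ iff $\frak p=\Ann_R(A/xA)$-type radicals for some quotient, and argue with Matlis duality: $\Att_{\widehat R}A=\Ass_{\widehat R}(D(A))$. The identity then reduces to the well-known relation $\Ass_R N=\{\frak P\cap R:\frak P\in\Ass_{\widehat R}N\}$ for the finitely generated $\widehat R$-module $N=D(A)$ regarded over $R$. This last identification is the most delicate step, so I would simply quote \cite[8.2.4, 8.2.5]{BS}.

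For (c) and (d), I would first pass to $\widehat R$ using (b) and the isomorphism $H^i_{\frak m}(M)\cong H^i_{\widehat{\frak m}}(\widehat M)$. The dimension bounds are unaffected, and $\frak p\in\Ass_RM$ iff some $\frak P\in\Ass\widehat M$ lies over it with $\dim\widehat R/\frak P=\dim R/\frak p$. The subtle point here is that in (d) the hypothesis that $R$ is a quotient of a Cohen–Macaulay ring guarantees that the formal fibres behave well (universal catenarity and unmixedness). We then work over a complete $R$ that is a quotient of a Gorenstein ring $S$ of dimension $n$. Local duality gives $\Att_R H^i_{\frak m}(M)=\Ass_R K^i(M)$, where $K^i(M)=\operatorname{Ext}^{n-i}_S(M,S)$ has dimension $\le i$, and $\Ass K^i(M)$ contains exactly the primes of $\Ass M$ of dimension $i$ among its primes of dimension $i$ (the standard fact $\dim K^i\le i$ with equality detected by $(\Ass M)_i$). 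This gives (d) in the complete case and (c) always, since (c) needs only completion and no hypothesis.

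The main obstacle is the descent in (d) from $\widehat R$ to $R$: one must show that a prime $\frak P$ attached to $H^i$ over $\widehat R$ with $\dim \widehat R/\frak P<i$ cannot contract to a prime $\frak p$ with $\dim R/\frak p=i$. I would settle this with \cite[Corollary 4.2]{CNN}, which uses exactly the hypothesis that $R$ is a quotient of a Cohen–Macaulay ring.
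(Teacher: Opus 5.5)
Your plan is the paper's route. The paper gives no argument of its own for this lemma; it only points to \cite[8.2.4, 8.2.5]{BS} and \cite[Corollary 4.2]{CNN}. You give elementary arguments for (a)--(c), pass to $\widehat R$ with local duality, and invoke \cite{CNN} for the descent in (d). Your sketches of (a) and (c) are fine.

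The genuine flaw is your claim that passing to $\widehat R$ leaves ``the dimension bounds unaffected''. If $\frak P\in\Att_{\widehat R}H^i_{\frak m}(M)$ and $\frak p=\frak P\cap R$, then $\frak P\supseteq\frak p\widehat R$ only gives $\dim\widehat R/\frak P\le\dim R/\frak p$. That is the wrong direction: local duality over $\widehat R$ bounds $\dim\widehat R/\frak P$ by $i$, not $\dim R/\frak p$.

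This inequality is exactly what fails without the hypothesis. Take a two-dimensional local domain $R$ whose completion has an associated prime $\frak P$ with $\dim\widehat R/\frak P=1$ (Nagata's examples). Then (c) over $\widehat R$ and (b) give $0=\frak P\cap R\in\Att_RH^1_{\frak m}(R)$ with $\dim R/0=2>1$. So the descent you delegate to \cite{CNN} must produce $\dim R/\frak p\le i$ itself, not merely exclude the case $\dim\widehat R/\frak P<i=\dim R/\frak p$. A clean way to get it is the identity $\Var(\Ann_RH^i_{\frak m}(M))=\{\frak p\mid H^{i-\dim R/\frak p}_{\frak pR_{\frak p}}(M_{\frak p})\neq 0\}$, valid when $R$ is a quotient of a Cohen--Macaulay local ring; it yields both parts of (d) at once.

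Note also that what you actually prove is $\{\frak p\in\Att_RH^i_{\frak m}(M)\mid\dim R/\frak p=i\}=\{\frak p\in\Ass_RM\mid\dim R/\frak p=i\}$, and that is the right reading. The literal implication ``$\frak p\in\Ass_RM\Rightarrow\dim R/\frak p=i$'' is false. For example, with $R=K[[x,y,z,w]]$ and $M=K\oplus R/((x,y)\cap(z,w))$ one has $H^1_{\frak m}(M)\cong K$, so $\frak m\in\Att_RH^1_{\frak m}(M)\cap\Ass_RM$ while $\dim R/\frak m=0$.

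In (b), your grouping argument already yields equality, so the Matlis-duality detour is unnecessary. As written it is also circular: it needs $\Att_RA=\Ass_RD(A)$ over the non-complete ring $R$, which is essentially (b) again. The reason no grouped summand $B_{\frak p}=\sum_{\frak P_i\cap R=\frak p}A_i$ is redundant is not that the primes are distinct. Rather, $B_{\frak p}$ contains some $A_i$ with $A_i\not\subseteq\sum_{j\neq i}A_j$, while $\sum_{\frak q\neq\frak p}B_{\frak q}\subseteq\sum_{j\neq i}A_j$. Hence $\sum_{\frak p}B_{\frak p}$ is a minimal secondary representation over $R$, and its attached primes are exactly the contractions $\frak P_i\cap R$.
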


In the following remark, we see that sequential sequences, sequential f-sequences are preserved under $\frak m$-adic completion. We also find some relations between the notions of sequential sequence, sequential f-sequence and the more known concepts of sequences.  Following \cite{CST}, an element $x\in\frak m$ is a {\it filter regular element} (f-element for short) if $x\notin\frak p$ for all $\p\in \Ass_R(M)$ satisfying $\dim (R/\frak p)>0$. Following \cite{Nh}, an element  $x\in\frak m$ is said to be a {\it generalized regular element} if $x\notin\frak p$ for all $\p\in \Ass_R(M)$ satisfying $\dim (R/\frak p)>1$. A sequence $x_1, \ldots , x_r$ of elements in $\frak m$ is called an {\it f-sequence} (resp. {\it generalized regular sequence}) of $M$ if  $x_i$ is an f-element (resp. generalized regular element) of  $M/(x_1, \ldots , x_{i-1})M$ for all $i\leq r$.

\begin{remark} \label{R:1} {\rm (a) Let $x_1, \ldots , x_r\in\frak m$. It follows by Lemma \ref{L:1}(b) that  $x_1, \ldots , x_r$  is an $M$-sequential sequence (resp. $M$-sequential f-sequence) if and only if it is an $\widehat M$-sequential sequence (resp. $\widehat M$-sequential f-sequence).
		
(b) Since  $\Ass_R(M)\subseteq \underset{i\leq d}{\bigcup}\Att_RH^i_{\frak m}(M)$ by Lemma \ref{L:1}(c), it follows that each sequential sequence of $M$ is an f-sequence; each sequential f-sequence of $M$ is a  generalized regular sequence. Note that sequential sequences (resp. sequential f-sequences) are not necessarily regular sequences (resp. f-sequences); regular sequences (resp. f-sequences) are not necessarily sequential sequences (resp. sequential f-sequences), see \cite[Remark 2.5]{LN}.}
\end{remark}

\begin{lemma} \label{L:1a} Suppose that $R$ is a quotient of a Gorenstein local ring. Let $\Defi(M)$ be defined as in Notation \ref{N:1}. Then
	\begin{enumerate}
\item[{\rm (a)}]  An element $x\in\frak m$ is an $M$-sequential element if and only if $x$ is $K^i(M)$-regular for all $i\in\Defi(M).$ If $x$ is an $M$-sequential element then $K^{i+1}(M)/xK^{i+1}(M)\cong K^i(M/xM)$ for all $i>0$ and there is an exact sequence  
$$0\to K^1(M)/xK^1(M)\to K^0(M/xM)\to (0:_{K^0(M)}x)\to 0.$$ In particular, $M$  is sequentially  Cohen-Macaulay if and only if so is $M/xM$.
\item[{\rm (b)}] A sequence  $x_1, \ldots , x_r$ of elements in $\frak m$  is an $M$-sequential sequence if and only if $x_1, \ldots , x_{\min\{i,r\}}$ is $K^i(M)$-sequence for all $i\in\Defi(M)$. 
\item[{\rm (c)}] An element $x\in\frak m$ is an $M$-sequential f-element if and only if $x$ is an f-element of $K^i(M)$ for all $i>1$.  If $x$ is an $M$-sequential f-element then  $\dim_{R}(0:_{K^i(M)}x)\leq 0$ for all $i>1$ and  there exists for each $i>0$ an exact sequence $$0\to K^{i+1}(M)/xK^{i+1}(M)\to  K^i(M/xM)\to (0:_{K^i(M)}x)\to 0.$$  
\item[{\rm (d)}] A sequence  $x_1, \ldots , x_r$ of elements in $\frak m$  is an $M$-sequential f-sequence if and only if $x_1, \ldots , x_{\min\{i-1,r\}}$ is an f-sequence of $K^i(M)$ for all $i>1$.  
\end{enumerate}
\end{lemma}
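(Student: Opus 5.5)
This lemma collects four facts about deficiency modules. My plan is to prove all of them through local duality and Lemma \ref{L:1}. Since $R$ is a quotient of a Gorenstein local ring $S$, with $n=\dim S$, we have $K^i(M)=\mathrm{Ext}_S^{n-i}(M,S)$. By local duality $H^i_{\frak m}(M)\cong \mathrm{Hom}(K^i(M),E)$ after completion. Matlis duality then identifies the attached primes of $H^i_{\frak m}(M)$ with the associated primes of $K^i(\widehat M)$, contracted to $R$ as in Lemma \ref{L:1}(b). So I may pass to $\widehat R$; by Remark \ref{R:1}(a) and faithful flatness, this changes neither the sequential notions nor regularity on $K^i$. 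The key identity is
$$\bigcup_{i\ge 1}\Att_R H^i_{\frak m}(M)=\bigcup_{i\in\Defi(M)}\Ass_R K^i(M).$$
From it, the first assertion of (a) is immediate: $x$ avoids all these primes exactly when $x$ is $K^i(M)$-regular for every $i\in \Defi(M)$. The same identity restricted to $i\ge 2$ and to primes with $\dim R/\frak p>0$ gives the first assertion of (c), namely that $x$ is an f-element of each $K^i(M)$ with $i>1$. In that case $0:_{K^i(M)}x$ has finite length, so $\dim (0:_{K^i(M)}x)\le 0$.

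For the exact sequences, I apply $\mathrm{Hom}_S(-,S)$ to $0\to M/(0:_Mx)\xrightarrow{x} M\to M/xM\to 0$ and to $0\to 0:_Mx\to M\to M/(0:_Mx)\to 0$. In case (a), $x$ is an f-element of $M$ by Remark \ref{R:1}(b), so $0:_Mx$ has finite length. This finite-length module only contributes to $K^0$. Hence $K^i(M/(0:_Mx))\cong K^i(M)$ for $i>0$, and there is a change at $i=0$. The long exact Ext sequence then breaks into
$$0\to K^{i+1}(M)/xK^{i+1}(M)\to K^i(M/xM)\to 0:_{K^i(M)}x\to 0.$$
For $i>0$ in case (a), the right-hand term vanishes by regularity, which gives $K^{i+1}(M)/xK^{i+1}(M)\cong K^i(M/xM)$. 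At $i=0$ one must track $K^0$ of the finite-length piece and check that the sequence still has the stated form; this bookkeeping is the step I would verify most carefully. Case (c) is the same argument, except that $0:_Mx$ has dimension at most $1$ since $x$ is a generalized regular element. The correction then touches only $K^0$ and $K^1$, while the displayed sequence for $i>0$ persists. I expect the main obstacle to be handling this $K^1$ term at $i=1$.

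The "in particular" of (a) uses the criterion that $M$ is sequentially Cohen--Macaulay if and only if each $K^i(M)$ is zero or Cohen--Macaulay of dimension $i$ (Schenzel). Quotienting a Cohen--Macaulay module by a regular element, and lifting Cohen--Macaulayness back along a regular element, then give the equivalence via the isomorphisms just proved. For $K^0$, one uses that $K^0$ is always Cohen--Macaulay or $0$ in the relevant dimension.

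Parts (b) and (d) follow by induction on $r$ from (a) and (c). Set $M'=M/x_1M$, so that $K^i(M')\cong K^{i+1}(M)/x_1K^{i+1}(M)$. Then $x_2,\dots$ being $M'$-sequential means $x_2,\dots$ is regular on $K^{i+1}(M)/x_1K^{i+1}(M)$. This is exactly how the index shift produces the length $\min\{i,r\}$. In (d) the extra term $0:_{K^i(M)}x$ has finite length, and f-sequences are insensitive to finite-length modifications. So the same induction yields the length $\min\{i-1,r\}$.
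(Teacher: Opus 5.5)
Your proposal is correct. Its skeleton is the paper's: $\Att_RH^i_{\frak m}(M)=\Ass_RK^i(M)$ by local and Matlis duality gives the element-wise criteria in (a) and (c), and (b), (d) follow by induction on $r$ through the index shift $K^i(M/x_1M)\leftrightarrow K^{i+1}(M)/x_1K^{i+1}(M)$, with finite-length error terms harmless for f-sequences in (d).

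The real difference is how you obtain the exact sequences.
\begin{itemize}
\item The paper cites \cite[Lemma 3.6]{C2025} for (a).
\item For (c), the paper takes the local cohomology sequence $0\to H^i_{\frak m}(M)/xH^i_{\frak m}(M)\to H^i_{\frak m}(M/xM)\to (0:_{H^{i+1}_{\frak m}(M)}x)\to 0$ from \cite[Lemma 2.4]{DN} and Matlis-dualizes it.
\item You instead apply $\mathrm{Ext}_S^{n-\bullet}(-,S)$ directly to the two short exact sequences built from $0:_Mx$.
\end{itemize}
This is the Matlis-dual form of the same argument. It buys a self-contained proof that treats (a) and (c) uniformly, the only change being $\dim(0:_Mx)\le 0$ versus $\le 1$.

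The boundary indices you flagged resolve cleanly. Let $\pi\colon M\to M/(0:_Mx)$ be the projection and $\iota\colon M/(0:_Mx)\to M$ the map induced by $x$. Then $\pi^*\iota^*=x$ on each $K^i(M)$.
\begin{itemize}
\item The map $\pi^*$ on $K^i$ is injective whenever $K^{i+1}(0:_Mx)=0$, and it is an isomorphism when in addition $K^i(0:_Mx)=0$.
\item Hence $\ker\iota^*=0:_{K^i(M)}x$ exactly where injectivity holds. This covers $i=0$ in (a) and $i=1$ in (c).
\item On the left, $\pi^*$ is an isomorphism on $K^{i+1}$ for $i\ge 0$ in (a) and for $i\ge 1$ in (c). So the cokernel of $\iota^*$ on $K^{i+1}$ is $K^{i+1}(M)/xK^{i+1}(M)$ in the stated ranges.
\end{itemize}

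One small omission in the ``in particular'' of (a): the isomorphisms $K^{i+1}(M)/xK^{i+1}(M)\cong K^i(M/xM)$ for $i>0$ do not control $K^1(M)$. You must note separately that $K^1(M)$ is automatically $0$ or Cohen--Macaulay of dimension $1$, since $\dim K^1(M)\le 1$ and $x$ is $K^1(M)$-regular.
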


\begin{proof} The statement (a) was proved in \cite[Lemma 3.6]{C2025}. 

 The statement (b) follows by assertion (a) and induction on $r$.

 (c) It follows by   \cite[10.2.20, 11.2.6]{BS} that $\Att_{R}H^i_{\frak m}(M)=\Ass_{R}K^i(M)$ for all integers $i$. So, $x$ is an $M$-sequential f-element if and only if  $x$ is an f-element of $K^i(M)$ for all $i>1.$ Suppose that $x$ is an $M$-sequential f-element. Then it is a generalized regular element of $M$ by Remark \ref{R:1}(b). So,  there exists by  \cite[Lemma 2.4]{DN} an exact sequence 
$$0\to H^i_{\frak m}(M)/xH^i_{\frak m}(M)\to  H^i_{\frak m}(M/xM)\to (0:_{H^{i+1}_{\frak m}(M)}x)\to 0$$ for all $i\geq 1$. Hence, we have the induced exact sequence
 $$0\to K^{i+1}(M)/xK^{i+1}(M)\to  K^i(M/xM)\to (0:_{K^i(M)}x)\to 0$$ for all $i\geq 1.$ As $x$ is an f-element of $K^i(M)$, we have $\dim_R(0:_{K^i(M)}x)\leq 0$  for all $i\geq 2.$  

The statement (d) follows by assertion (c) and induction on $r$. 
\end{proof}

It is natural to ask whether sequential sequences are preserved under permutations. The  answer is negative in general.  Below we show that even $R$ is sequentially Cohen-Macaulay, a permutation of a sequential sequence need not be  a sequential sequence.
 
\begin{example} \label{E:1a} {\rm Let $R=K[[x, y,z]]/(x)\cap (y,z)$, where $K[[x, y,z]]$ is the formal power series ring over a field $K$. Set $\frak m=(x, y, z)R.$  It is clear that  $R$ is sequentially Cohen-Macaulay. We have $\Att_R H^2_{\frak m}(R)=\{xR\}$, $\Att_R H^1_{\frak m}(R)=\{(y,z)R\}.$ Hence,  $x + y, z$ is an $R$-sequential sequence, but $z, x+y$ is not an $R$-sequential sequence}.
\end{example}

A sequence $x_1, \ldots , x_r\in\frak m$ is called an {\it unconditioned  sequential sequence} (resp. {\it unconditioned f-sequence}) of $M$ if $x_{\sigma(1)},\ldots, x_{\sigma(r)}$ is a  sequential sequence (resp. f-sequence) of $M$ for all permutations $\sigma\in\Bbb S_r.$  
 Now, we give a criterion for a sequential sequence to be an unconditioned  sequential sequence. 

\begin{proposition} \label{P:1b} Let $x_1, \ldots , x_r\in\frak m$ be a sequential sequence of $M$. Then $x_1, \ldots , x_r$ is an unconditioned sequential sequence of $M$ if and only if it is an unconditioned f-sequence. 
\end{proposition}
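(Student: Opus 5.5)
The forward implication is immediate. If $x_1,\ldots ,x_r$ is an unconditioned sequential sequence, then every permutation $x_{\sigma(1)},\ldots ,x_{\sigma(r)}$ is a sequential sequence of $M$. By Remark \ref{R:1}(b), each of these is an f-sequence of $M$, so $x_1,\ldots ,x_r$ is an unconditioned f-sequence.

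For the converse, assume $x_1,\ldots ,x_r$ is a sequential sequence and an unconditioned f-sequence, and fix $\sigma\in\Bbb S_r$. By Remark \ref{R:1}(a), and since f-sequences are also preserved under completion (again by Lemma \ref{L:1}(b), applied to $\Ass$), we may replace $R,M$ by $\widehat R,\widehat M$. Then $R$ is a quotient of a Gorenstein (indeed regular) local ring by Cohen's structure theorem. By Lemma \ref{L:1a}(b), the hypothesis says that $x_1,\ldots ,x_{\min\{i,r\}}$ is a $K^i(M)$-sequence for all $i\in\Defi(M)$. We must show that $x_{\sigma(1)},\ldots ,x_{\sigma(\min\{i,r\})}$ is a $K^i(M)$-sequence for all such $i$, and we split into two cases.

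\emph{Case $i\geq r$.} Here $x_1,\ldots ,x_r$ is a regular sequence on the finitely generated module $K^i(M)$ over a local ring. Hence every permutation of it is again a $K^i(M)$-sequence, and this case is done.

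\emph{Case $i<r$.} Since $\dim K^i(M)\leq i$ and $x_1,\ldots ,x_i$ is a $K^i(M)$-sequence, $K^i(M)$ is Cohen--Macaulay of dimension $i$ with $\Ass K^i(M)$ consisting of primes $\frak p$ with $\dim R/\frak p=i$. By Lemma \ref{L:1}(d) and $\Att_R H^i_{\frak m}(M)=\Ass_R K^i(M)$, these are exactly the primes $\frak p\in\Ass_R(M)$ with $\dim R/\frak p=i$. Because $K^i(M)$ is Cohen--Macaulay, it then suffices to show that $x_{\sigma(1)},\ldots ,x_{\sigma(i)}$ is a system of parameters of $K^i(M)$. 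This amounts to showing that $\dim R/(\frak p+(x_{\sigma(1)},\ldots ,x_{\sigma(i)}))=0$ for every $\frak p\in\Ass_R(M)$ with $\dim R/\frak p=i$.

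To get this I would use the characterization of \cite{CST}: a sequence $y_1,\ldots,y_k$ is an f-sequence of $M$ if and only if $y_1/1,\ldots ,y_k/1$ is an $M_{\frak q}$-regular sequence for every $\frak q\in\Supp M\cap\Var(y_1,\ldots,y_k)\setminus\{\frak m\}$. Suppose some prime $\frak q\neq\frak m$ is minimal over $\frak p+(x_{\sigma(1)},\ldots ,x_{\sigma(i)})$. Then $\frak pR_{\frak q}\in\Ass M_{\frak q}$, so $\depth M_{\frak q}\leq\dim R_{\frak q}/\frak pR_{\frak q}$. The idea is to combine this with the regularity of a suitable initial segment of another permutation, one that starts with $x_{\sigma(1)},\ldots ,x_{\sigma(i)}$ and then uses further $x_j$ lying in $\frak q$, to force $\depth M_{\frak q}>\dim R_{\frak q}/\frak pR_{\frak q}$, a contradiction.

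This is the main obstacle, since Krull's principal ideal theorem alone only gives $\operatorname{ht}(\frak q/\frak p)\leq i$. If this direct route fails, I would instead induct on $r$: show first that each $x_{\sigma(1)}$ is an $M$-sequential element, i.e.\ $K^i(M)$-regular for all $i\in\Defi(M)$. Then pass to $M/x_{\sigma(1)}M$, using the isomorphisms $K^{i+1}(M)/xK^{i+1}(M)\cong K^i(M/xM)$ of Lemma \ref{L:1a}(a) to transport both hypotheses to the shorter sequence. The point to verify is that the remaining elements still form a sequential sequence and an unconditioned f-sequence on $M/x_{\sigma(1)}M$.
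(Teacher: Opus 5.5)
Your forward direction, your case $i\ge r$, and your reduction of the case $i<r$ are all correct. That reduction says it suffices that no prime $\frak q\neq\frak m$ is minimal over $\frak p+(x_{\sigma(1)},\ldots,x_{\sigma(i)})$ for $\frak p\in\Ass_R(M)$ with $\dim R/\frak p=i$. But that step is the entire content of the converse, and you leave it open. Your first suggestion brings in further elements $x_j\in\frak q$, and these need not exist: nothing forces any $x_j$ outside $\{x_{\sigma(1)},\ldots,x_{\sigma(i)}\}$ to lie in $\frak q$. Your fallback induction names as ``the point to verify'' exactly what has to be proved. As written, the proof is incomplete. (A minor point: preservation of f-sequences under completion comes from $\Ass_R M=\{\frak P\cap R\mid \frak P\in\Ass_{\widehat R}\widehat M\}$, not from Lemma \ref{L:1}(b), which concerns attached primes.)

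The missing observation is elementary and needs no extra elements. Since $\frak q\neq\frak m$, we have $\dim R/\frak q\geq 1$, hence
$$\dim R_{\frak q}/\frak pR_{\frak q}=\operatorname{ht}(\frak q/\frak p)\le \dim R/\frak p-\dim R/\frak q\le i-1.$$
On the other hand, $x_{\sigma(1)},\ldots,x_{\sigma(i)}\in\frak q$ is an initial segment of the f-sequence $x_{\sigma(1)},\ldots,x_{\sigma(r)}$, hence an $M_{\frak q}$-sequence, so $\depth M_{\frak q}\ge i$. Since $\frak pR_{\frak q}\in\Ass M_{\frak q}$, we also have $\depth M_{\frak q}\le \dim R_{\frak q}/\frak pR_{\frak q}$, a contradiction.

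With this line added your argument is a complete proof, and it takes a genuinely different route from the paper's. The paper proves only the case $r=2$. For $i\ge 2$ it uses Nakayama-type manipulations on $K^i(M)$ to show that $x_2$ is $K^i(M)$-regular, and that $x_1$ is regular on $K^{i}(M)/x_2K^{i}(M)$. For $i=1$ it notes that the f-element $x_2$ avoids the one-dimensional primes in $\Ass K^1(M)\subseteq\Ass_R(M)$. It then reduces general $r$ to adjacent transpositions. Your approach instead treats all permutations at once through Lemma \ref{L:1a}(b). In effect it proves the more general fact that, whenever $K^i(M)$ is Cohen--Macaulay of dimension $i$, every f-sequence of $M$ of length $i$ is a $K^i(M)$-sequence. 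The paper's $i=1$ argument is precisely the case $i=1$ of this.
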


\begin{proof} By Remark \ref{R:1}(a), we can assume that $R$ is complete. By Remark \ref{R:1}(b), if $x_1, \ldots , x_r$ is an unconditioned $M$-sequential sequence then it is an unconditioned f-sequence. 

Assume that $x_1, \ldots , x_r$ is an unconditioned f-sequence of $M$. We prove by induction on $r$ that $x_1, \ldots , x_r$ is an unconditioned $M$-sequential sequence.  The case of $r=1$ is clear. Let $r=2$. First we prove that $x_2$ is an $M$-sequential element.  Let $\Defi(M)$ be defined as in Notation \ref{N:1} and let $i\in\Defi(M)$. By Lemma \ref{L:1a}(a), we need to prove that $x_2$ is $K^i(M)$-regular. Set $L=(0:_{K^i(M)}x_2)$. Let $z\in L$. Then $x_2z=0\in x_1K^i(M).$ Hence $z\in (x_1K^i(M):_{K^i(M)}x_2).$ Since $i\in\Defi(M)$, it follows by Nakayama and Lemma \ref{L:1a}(a) that  $K^{i-1}(M/x_1M)\neq 0$. We divide into two cases.

\noindent{\it Case 1: $i\geq 2.$} Then $i-1\in\Defi(M/x_1M)$. Since $x_2$ is an $M/x_1M$-sequential element, $x_2$ is $K^{i-1}(M/x_1M)$-regular. Hence $x_2$ is $K^i(M)/x_1K^i(M)$-regular by Lemma \ref{L:1a}(a). So, $z\in (x_1K^i(M):_{K^i(M)}x_2)=x_1K^i(M).$  Write $z=x_1z'$ for some $z'\in K^i(M).$ We have $0=x_2z=x_2x_1z'.$ Hence $x_2z'\in (0:_{K^i(M)}x_1)=0$ because $x_1$ is $K^i(M)$-regular. So,  $x_2z'=0$. Hence $z'\in (0:_{K^i(M)}x_2)=L.$ Therefore, each $z\in L$, there exists $z'\in L$ such that $z=x_1z'.$ Hence, $x_1L=L.$ So, $L=0$ by Nakayama's Lemma. Hence $x_2$ is $K^i(M)$-regular. 

\noindent{\it Case 2: $i=1.$} As $x_1$ is $K^1(M)$-regular, $K^1(M)$ is Cohen-Macaulay of dimension $1$. Let $\frak p\in\Ass_RK^1(M).$ Then $\dim (R/\frak p)=1.$ Hence $\frak p\in\Ass_R(M)$.  As $x_1, x_2$ is an $M$-sequential sequence, we get by Remark \ref{R:1}(b) that $x_1, x_2$ is an f-sequence of $M$. So, $x_2, x_1$ is an f-sequence of $M$ by our assumption. Hence $x_2\notin\frak p$. Therefore, $x_2$ is $K^1(M)$-regular. 

Next we prove that $x_1$ is an $M/x_2M$-sequential element. Let $i\in\Defi(M/x_2M).$  By Lemma \ref{L:1a}(a), we need  to show that $x_1$ is $K^i(M/x_2M)$-regular. As $x_2$ is an $M$-sequential element,  $K^i(M/x_2M)\cong K^{i+1}(M)/x_2K^{i+1}(M)$  by Lemma \ref{L:1a}(a). Hence $K^{i+1}(M)\neq 0$.   Let $z\in (x_2K^{i+1}(M):_{K^{i+1}(M)}x_1).$ Then $x_1z=x_2y$ for some $y\in K^{i+1}(M)$. It follows that  $y\in (x_1K^{i+1}(M):_{K^{i+1}(M)}x_2).$ Since $x_1$ is $M$-sequential element, we have by Lemma \ref{L:1a}(a) that $K^i(M/x_1M)\cong K^{i+1}(M)/x_1K^{i+1}(M)\neq 0.$  As $x_2$ is an $M/x_1M$-sequential element, $x_2$ is $K^{i+1}(M)/x_1K^{i+1}(M)$-regular. Hence $y\in (x_1K^{i+1}(M):_{K^{i+1}(M)}x_2)=x_1K^{i+1}(M).$ So we can write $y=x_1y'$ for some $y'\in K^{i+1}(M).$ Hence $x_1z=x_2x_1y'$ and hence  $x_1(z-x_2y')=0$. So, $z-x_2y'\in (0:_{K^{i+1}(M)}x_1)=0$ as $x_1$ is an $M$-sequential element. Hence, we have $z=x_2y'\in x_2K^{i+1}(M).$ So, $x_1$ is $K^{i+1}(M)/x_2K^{i+1}(M)$-regular. Hence $x_1$ is $K^i(M/x_2M)$-regular. The result is true for $r=2.$

Let $r>2.$ Note that any permutation is a composition of transpositions of adjacent indices. So, it is enough to show that $x_1,\ldots, x_{i-2},x_i,x_{i-1},x_{i+1},\ldots , x_r$ is an $M$-sequential sequence for all $i$. This follows from the case $r=2$ to the module $M/(x_1,\ldots,x_{i-2})$ and its sequential sequence $x_{i-1}, x_i$.
\end{proof}

 We recall the results by D. Eisenbud \cite{Ei}, C. Huneke and V. Trivedi \cite[Remarks 1.12(1)]{HT} on the preservation of regular sequences and f-sequences under small perturbation.   

\begin{lemma} \label{L:1c}  If $x_1,\ldots, x_r\in\frak m$ is a regular sequence (resp.  f-sequence) of $M$ then there exists an integer $N>0$ such that $x_1+\epsilon_1,\ldots, x_r+\epsilon_r$ is a regular sequence (resp.  f-sequence) of $M$ for all $\epsilon_1,\ldots , \epsilon_r\in\frak m^N$.
\end{lemma}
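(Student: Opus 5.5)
We will prove Lemma \ref{L:1c} by induction on $r$. The key tool is a uniform Artin--Rees bound (``Eisenbud's lemma''). The point to exploit is that a perturbation $\epsilon_i\in\frak m^N$ with $N$ large cannot change the relevant colon and annihilator modules.

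\emph{Regular sequences.} Set $I_j=(x_1,\ldots,x_j)$. By Artin--Rees there is $c$ with $\frak m^{n}M\cap I_jM\subseteq \frak m^{n-c}I_jM$ for all $n\ge c$ and all $j\le r$. Take $N>c$.

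We first show that $(x_1+\epsilon_1,\ldots,x_j+\epsilon_j)M=I_jM$ modulo $\frak m^{N}M$ in a controlled way. More precisely, $I_j(\underline\epsilon)M+\frak m^{N-c}I_jM\supseteq I_jM$, so Nakayama gives $I_j(\underline\epsilon)M=I_jM$ whenever $N-c>0$. The reason is that $\epsilon_iM\subseteq \frak m^N M$, and we use $\frak m^NM\cap I_jM$ suitably; more simply, $I_jM\subseteq I_j(\underline\epsilon)M+\frak m^NM$.

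This equality is not literally true in general, because $\epsilon_i$ need not lie in $I_j$. So we instead follow Eisenbud's argument: we compare Koszul homology, or else reduce to the case $r=1$ and iterate.

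\emph{The case $r=1$.} Let $x$ be $M$-regular. By Artin--Rees, $(0:_Mx)=0$ forces $x u\in\frak m^{n}M\Rightarrow u\in\frak m^{n-c}M$. If $(x+\epsilon)u=0$, then $xu=-\epsilon u\in\frak m^{N}\cdot \frak m^{k}M$ whenever $u\in\frak m^kM$. Hence $u\in\frak m^{k+N-c}M$, and iterating gives $u\in\bigcap\frak m^nM=0$. So $x+\epsilon$ is $M$-regular for every $\epsilon\in\frak m^N$ with $N>c$.

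\emph{The inductive step for regular sequences.} Knowing only that $x_1+\epsilon_1$ is regular is not enough. We also need $x_2,\ldots,x_r$ to remain regular on $M/(x_1+\epsilon_1)M$, and this module varies with $\epsilon_1$. The main obstacle is a \emph{uniform} choice of $N$. We would handle it as Eisenbud does. Since $x_1,\ldots,x_r$ is regular, the Koszul complex $K(x;M)$ is acyclic. A perturbation changes the differentials by maps with image in $\frak m^N$ of the chains. Artin--Rees applied to the finitely many boundary modules then shows that the perturbed Koszul complex stays acyclic for $N\gg0$. Finally, acyclicity of the Koszul complex on $\frak m$-elements over a local ring is equivalent to being a regular sequence, because $H_1=0$ forces depth $r$.

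\emph{f-sequences.} For f-sequences we would follow Huneke--Trivedi and work with localizations. By the basic characterization of filter regular elements, $x_1,\ldots,x_r$ is an f-sequence iff, for every $\frak p\in\Supp M\setminus\{\frak m\}$, the images $x_1,\ldots,x_j$ lying in $\frak p$ form an $M_\frak p$-regular sequence. Equivalently, the Koszul homologies $H_i(x_1,\ldots,x_j;M)$ for $i\ge1$ have finite length.

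Finite length of $H_1$ can be detected as follows. For some $t$, $\frak m^t$ kills $(I_{j-1}M:x_j)/I_{j-1}M$. We rephrase this as regularity of $x_j$ on $M/I_{j-1}M$ modulo $H^0_\frak m$, that is, on $\overline M_j=(M/I_{j-1}M)/H^0_\frak m(M/I_{j-1}M)$. We then apply the same Artin--Rees perturbation argument to the Koszul complex, tensored with an $\frak m$-adically separated filtration. This shows $H_i(x+\epsilon;M)$ is annihilated by $\frak m^{t'}$ with $t'$ independent of $\epsilon$. The independence of $t'$ from $\epsilon$ is again the delicate point. We handle it by choosing the Artin--Rees constant from the finitely many original modules before perturbing.
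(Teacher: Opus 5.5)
The paper does not prove this lemma; it quotes Eisenbud for regular sequences and Huneke--Trivedi (Remarks 1.12(1)) for f-sequences. Your regular-sequence half is a correct, if sketched, reconstruction of Eisenbud's argument. The $r=1$ computation is right. For general $r$, let $C=K(x_1,\ldots,x_r;M)$ and let $C'$ be the perturbed Koszul complex, with $d'=d+e$ and $e$ having image in $\frak m^N$. The successive-approximation argument uses Artin--Rees for the boundary submodules $B_{i-1}(C)\subseteq C_{i-1}$ and $B_i(C)\subseteq C_i$, followed by Krull's intersection theorem. It does show that $C'$ is acyclic, and $H_1=0$ then gives regularity. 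Your opening paragraph asserting $I_j(\underline\epsilon)M=I_jM$ is false, as you yourself note, and should be deleted.

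The genuine gap is in the f-sequence half, exactly at the step you call delicate. Your reduction is fine: $x_1,\ldots,x_r$ is an f-sequence of $M$ iff $H_1(x_1,\ldots,x_j;M)$ has finite length for every $j\le r$. However, the acyclic argument does not carry over ``verbatim''. There, for $u\in Z_i(C')\cap\frak m^kC_i$ you find $v\in\frak m^{k+N-c}C_i$ with $d_iv=d_iu$. You then use $Z_i(C)=B_i(C)$ to write $u-v=d_{i+1}w$ with $w$ of controlled order. When $H_i(C)$ is merely of finite length, $u-v$ need not be a boundary.

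Passing to $\overline M_j$ does not repair this. Once $x_1,\ldots,x_{j-1}$ are perturbed, the relevant module $(M/I_{j-1}(\underline\epsilon)M)/H^0_{\frak m}(M/I_{j-1}(\underline\epsilon)M)$ varies with $\underline\epsilon$, so its Artin--Rees constant is not uniform. ``Tensoring with an $\frak m$-adically separated filtration'' is not an argument either.

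What is missing is eventual exactness of $C$. If $\frak m^tH_i(C)=0$, Artin--Rees for $Z_i(C)\subseteq C_i$ gives $Z_i(C)\cap\frak m^nC_i\subseteq\frak m^{n-c'}Z_i(C)\subseteq B_i(C)$ for all $n\ge t+c'$. So there is $n_0$, depending only on $C$, such that $Z_i(C)\cap\frak m^{n}C_i\subseteq B_i(C)$ for $n\ge n_0$. Now take $u\in Z_i(C')$ and $a\in\frak m^{n_0}$. The element $au\in Z_i(C')\cap\frak m^{n_0}C_i$ can be run through the acyclic iteration unchanged, because every difference $u_k-v_k$ it produces lies in $Z_i(C)\cap\frak m^{n_0+k}C_i\subseteq B_i(C)$. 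Krull's theorem then gives $au\in B_i(C')$, that is, $\frak m^{n_0}H_i(C')=0$.

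Choosing $N$ larger than the Artin--Rees constants of the finitely many complexes $K(x_1,\ldots,x_j;M)$, $j\le r$, completes your route. What must be independent of $\underline\epsilon$ is $N$; uniformity of the annihilating exponent comes for free but is not required.
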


Now we show that sequential sequences and  sequential f-sequences are preserved under small pertubations.

\begin{proposition} \label{P:1}  If $x_1, \ldots , x_r\in\frak m$ is a sequential sequence (resp. sequential f-sequence) of $M$ then there exists an integer $N>0$ such that $x_1+\epsilon_1, \ldots , x_r+\epsilon_r$ is a sequential sequence (resp. sequential f-sequence) of $M$ for all $\epsilon_1, \ldots , \epsilon_r\in\frak m^N.$ 
\end{proposition}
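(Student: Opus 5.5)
We reduce to the complete case and translate sequentiality into regularity (resp. filter regularity) on the deficiency modules, where Lemma \ref{L:1c} applies. First, the notions involved are stable under completion. By Remark \ref{R:1}(a), a sequence is $M$-sequential (resp. sequential f-) if and only if it is $\widehat M$-sequential (resp. sequential f-). Also $\frak m^N\widehat R\cap R=\frak m^N$ and $\widehat R/\frak m^N\widehat R\cong R/\frak m^N$. Hence it suffices to find $N$ working over $\widehat R$ for the module $\widehat M$ and perturbations $\epsilon_i\in\frak m^N\subseteq \frak m^N\widehat R$. So we may assume $R$ is complete, hence a quotient of a Gorenstein (regular) local ring, and the deficiency modules $K^i(M)$ are defined and finitely generated. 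Note that $K^i(M)=0$ for $i>d$.

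For the sequential case, Lemma \ref{L:1a}(b) says $x_1,\ldots,x_r$ is $M$-sequential if and only if $x_1,\ldots,x_{\min\{i,r\}}$ is a $K^i(M)$-regular sequence for every $i\in\Defi(M)$. The set $\Defi(M)$ is finite, being contained in $\{1,\ldots,d\}$. For each such $i$, Lemma \ref{L:1c} applied to the finitely generated module $K^i(M)$ and the regular sequence $x_1,\ldots,x_{\min\{i,r\}}$ gives an integer $N_i$ such that $x_1+\epsilon_1,\ldots,x_{\min\{i,r\}}+\epsilon_{\min\{i,r\}}$ is $K^i(M)$-regular whenever all $\epsilon_j\in\frak m^{N_i}$. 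Put $N=\max_i N_i$ and apply Lemma \ref{L:1a}(b) in the reverse direction. The point is that $\Defi(M)$ and the modules $K^i(M)$ depend only on $M$ and not on the sequence. This converts the statement into finitely many instances of Lemma \ref{L:1c}.

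The sequential f-sequence case is identical, using Lemma \ref{L:1a}(d). The sequence $x_1,\ldots,x_r$ is $M$-sequential f- if and only if $x_1,\ldots,x_{\min\{i-1,r\}}$ is an f-sequence of $K^i(M)$ for all $i>1$. Only $2\le i\le d$ matter, since $K^i(M)=0$ otherwise and every sequence is an f-sequence of the zero module. Apply the f-sequence version of Lemma \ref{L:1c} to each $K^i(M)$ and take the maximum of the resulting finitely many $N_i$.

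The main thing to check is that the reduction to the complete case is legitimate. The perturbations must lie in $\frak m^N$ of $R$, which is fine because $\frak m^N\subseteq\frak m^N\widehat R$. Being sequential is also detected on $\widehat M$ for elements of $R$, by Remark \ref{R:1}(a) applied to the perturbed sequence. A secondary check is that Lemma \ref{L:1a}(b),(d) are stated for arbitrary sequences in $\frak m$, so they apply to the perturbed sequence just as to the original one. No further obstacle is expected.
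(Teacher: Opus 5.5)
Your proof is correct and follows essentially the same route as the paper: you reduce to the complete case via Remark \ref{R:1}(a), then use Lemma \ref{L:1a}(b) (resp.\ (d)) to turn the statement into finitely many regular (resp.\ filter regular) sequences on the deficiency modules $K^i(M)$. You then apply Lemma \ref{L:1c} to each of these and take $N=\max_i N_i$. Your extra checks are exactly the details the paper leaves implicit: that perturbations from $\frak m^N$ lie in $\frak m^N\widehat R$, and that only finitely many $i\le d$ are involved.
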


\begin{proof} By Remark \ref{R:1}(a), we can assume that $R$ is complete. 

Assume that $x_1, \ldots , x_r$  is a sequential sequence of $M$. Let $\Defi(M)$ be defined as in Notation \ref{N:1} and let  $i\in\Defi(M)$.  Then $x_1, \ldots , x_{\min\{i,r\}}$ is a $K^i(M)$-sequence by Lemma \ref{L:1a}(b). By Lemma \ref{L:1c}, there exists an integer $N_i>0$ such that $x_1+\epsilon_1, \ldots , x_{\min\{i,r\}}+\epsilon_{\min\{i,r\}}$ is a $K^i(M)$-sequence  for all $\epsilon_1,\ldots , \epsilon_{\min\{i,r\}}\in\frak m^{N_i}$. Set $N=\max\{N_i\mid i\in\Defi(M)\}.$ Then, for any $\epsilon_1,\ldots , \epsilon_r\in\frak m^{N},$ the sequence  $x_1+\epsilon_1, \ldots , x_r+\epsilon_r$ is an $M$-sequential sequence by Lemma \ref{L:1a}(b). The rest statement follows by using Lemma \ref{L:1a}(d),  Lemma \ref{L:1c} with the similar arguments.  
\end{proof}

 Caviglia-Stefani \cite{CSt} defined the $\E-depth$ of a graded module over a polynomial ring  and use it to characterize sequential Cohen-Macaulay modules. The first author \cite{C2025} made some slight adjustments to define $\E-depth$ in both graded case and local case, then found the following relation between $\E-depth$ and the lengths of sequential sequences.   
  
\begin{theorem} {\rm (See \cite[Theorems 3.8, 3.13]{C2025})}. \label{L:1cbs} Let $\Defi(M)$ be defined as in Notation \ref{N:1}. Then each sequential sequence of $M$ can be extended to a maximal one, all maximal sequential sequences of $M$  have the same length. This common length is equal to $\E-depth_R(M)$ defined as follows
$$\E-depth_R(M)=\max\{r\leq d\mid  \depth_{\widehat R}(K^i(\widehat M))\geq \min\{i,r\}\ \text{for all}\ i\in\Defi(M)\}.$$
In particular, $M$ is sequentially Cohen-Macaulay if and only if $\E-depth_R(M)=\dim_R(M)$ and $R/\Ann_R(M)$ is a quotient of a Cohen-Macaulay local ring.  
\end{theorem}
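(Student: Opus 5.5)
The statement is quoted from \cite[Theorems 3.8, 3.13]{C2025}, but I would argue it directly from Lemma \ref{L:1a}. First I reduce to the complete case. By Remark \ref{R:1}(a), a sequence in $\frak m$ is $M$-sequential if and only if it is $\widehat M$-sequential, and $\E-depth_R(M)$ is defined through $\widehat M$. So I may assume $R$ is complete, hence a quotient of a Gorenstein ring, and $K^i(M)$ is defined. Lemma \ref{L:1a}(b) then says that $x_1,\ldots,x_r$ is $M$-sequential if and only if $x_1,\ldots,x_{\min\{i,r\}}$ is a $K^i(M)$-sequence for every $i\in\Defi(M)$. The whole argument is a translation of this criterion into depth statements.

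For extendability and the common length, let $e=\E-depth_R(M)$ and let $x_1,\ldots,x_r$ be $M$-sequential. The criterion gives $\depth K^i(M)\geq\min\{i,r\}$ for all $i\in\Defi(M)$. Moreover $r\leq d$, since each $x_j$ is a parameter: it avoids all primes in $\Att H^d_{\frak m}$, in particular the top-dimensional associated primes. Hence $r\leq e$.

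Next I show that if $r<e$, the sequence can be extended by one element. The new element $y$ must be regular on $K^i(M)/(x_1,\ldots,x_r)K^i(M)$ for every $i\in\Defi(M)$ with $i>r$. Each of these quotients has depth at least $\min\{i,e\}-r\geq 1$, because depth drops by exactly one modulo each regular element. So its associated primes are finitely many non-maximal primes, and prime avoidance produces $y\in\frak m$ outside all of them. The modules with $i\leq r$ impose no condition, since there $\min\{i,r+1\}=\min\{i,r\}=i$.

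Consequently a sequence of length $r$ is maximal exactly when $r=e$. One direction is the extension step above. For the other, if $r=e$ and an extension existed, it would force $\depth K^i\geq\min\{i,e+1\}$ for all $i\in\Defi(M)$, contradicting the maximality of $e$ in its defining formula. The exception is $e=d$, where no further sequential element exists, either because the quotient has dimension $0$ or because the cap $r\leq d$ applies. So every sequential sequence extends to a maximal one and all maximal ones have length $e$.

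For the characterization, first suppose $e=\dim M=d$. Then $\depth K^i(M)\geq i\geq\dim K^i(M)$ for all $i\in\Defi(M)$, so each nonzero deficiency module of $\widehat M$ with $i<d$ is zero or Cohen--Macaulay of dimension $i$, and $i=d$ likewise. By Schenzel's characterization, this makes $\widehat M$ sequentially Cohen--Macaulay. Descending to $M$ is the step I expect to be the main obstacle: it needs $R/\Ann_R M$ to be a quotient of a Cohen--Macaulay ring, so that the formal fibres behave and the dimension filtration commutes with completion. With that hypothesis, $M$ is sequentially Cohen--Macaulay. Conversely, if $M$ is sequentially Cohen--Macaulay, then $R/\Ann M$ is a quotient of a Cohen--Macaulay ring by the known result for sequentially Cohen--Macaulay modules (the analogue of Kawasaki's theorem). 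The filtration completes, and the deficiency modules of $\widehat M$ are Cohen--Macaulay of dimension $i$, which gives $e=d$.
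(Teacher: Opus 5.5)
Your proof is correct. For this statement the paper only cites \cite{C2025}. However, the first part is the case $I=\mathfrak m$ of Theorem~\ref{T:1bs}(a), which the paper proves by induction on the length of a given \emph{maximal} sequential sequence. By Lemma~\ref{L:1a}(a), $K^{i-1}(M/x_1M)\cong K^i(M)/x_1K^i(M)$, so every relevant depth drops by one and the E-depth of $M/x_1M$ is one less than that of $M$; prime avoidance enters only in the base case $r=0$.

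You instead apply the cumulative criterion of Lemma~\ref{L:1a}(b) once. This gives $r\le e$ for every sequential sequence, and you extend any sequence with $r<e$ by prime avoidance on the modules $K^i(M)/(x_1,\ldots,x_r)K^i(M)$, $i\in\Defi(M)$, $i>r$. The mechanism is the same. Your version shows directly why no sequence shorter than $e$ can be maximal. The paper's one-step induction is the template it reuses for the Ef-depth in Theorem~\ref{T:1bs}(b), where Lemma~\ref{L:1a}(c) gives only an exact sequence with a finite-length cokernel rather than an isomorphism.

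For the final equivalence the paper gives no argument. Your route through \cite[Theorem 5.5]{Sch} is the natural one, and the two facts you take as known are correct consequences of Kawasaki's criterion: a local ring is a quotient of a Cohen--Macaulay local ring iff it is universally catenary with Cohen--Macaulay formal fibres.

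\emph{Descent.} If $R/\Ann_R(M)$ is such a quotient, take $\mathfrak p\in\Ass_R(M)$. Every $\mathfrak P\in\Ass_{\widehat R}(\widehat R/\mathfrak p\widehat R)$ is then minimal, since Cohen--Macaulay fibres have no embedded primes. It also satisfies $\dim\widehat R/\mathfrak P=\dim R/\mathfrak p$, by Ratliff's theorem. Hence the dimension filtration commutes with completion, and Cohen--Macaulayness of its factors descends from $\widehat M$ to $M$.

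\emph{Converse.} If $M$ is sequentially Cohen--Macaulay, each factor $N$ of its dimension filtration is Cohen--Macaulay, so $R/\Ann_R(N)$ satisfies Kawasaki's conditions (the Cohen--Macaulay case). These conditions concern only primes of $\Supp_R(M)=\bigcup\Supp_R(N)$: the formal fibre at $\mathfrak p$ is $\widehat R\otimes_R k(\mathfrak p)$ regardless of which quotient one takes. So the conditions pass to $R/\Ann_R(M)$.
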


Theorem \ref{L:1cbs} suggests the definition of $\E-depth$ and $\Ef-depth$ of $M$ in an ideal $I$.

\begin{definition} \label{D:2} {\rm For a proper ideal $I$ of $R$, the $\E-depth_R(I,M)$ and $\Ef-depth_R(I,M)$ of $M$ in $I$ are defined as follows:} 
\begin{align} \E-depth_R(I, M)&:=\max\{r\leq d\mid \depth_{\widehat R}(I\widehat R, K^i(\widehat M))\geq \min\{i,r\} \ \text{\rm for all}\ i\in\Defi(M)\}.\notag\\
\Ef-depth_R(I,M)&:=\sup\{r\in\Bbb N\mid \f-depth_{\widehat R}(I\widehat R, K^i(\widehat M))\geq \min\{i-1,r\}\ \text{\rm for all}\ i>1\}.\notag\end{align}
\end{definition} 

 Here, $\f-depth_R(I,M)$ was introduced by R. Lu and Z. Tang \cite{LT}, which was defined to be the supremum of the lengths of f-sequences of $M$ in $I$. Note that $$\f-depth (I,M)=\inf\{i\in\Bbb N\mid H^i_I(M)\ \text{is not Artinian}\}.$$ If $\dim(M/IM)>0$ then $\f-depth_R(I,M)<\infty$. In this case, each f-sequence of $M$ in $I$ can be extended to a maximal one,  all maximal f-sequences of $M$ in $I$ have the same length and this common length is equal to $\f-depth_R(I,M)$.  If $\dim(M/IM)\leq 0$ then there exists an f-sequence of $M$ in $I$ of length $r$ for any $r\in\Bbb N$, in this case $\f-depth(I,M)=\infty.$  

The  depth and f-depth do not decrease under small pertubations as shown below.

\begin{lemma} \label{L:1d} Let $I$ be a proper ideal of $R$. Let $x_1, \ldots , x_n$ be a system of generators of $I$.   Then there exists an integer $N>0$ such that $\depth (I, M)\leq \depth(I(\underline{\epsilon}),M)$ and $\f-depth (I, M)\leq \f-depth(I(\underline{\epsilon}),M)$ for any $n$-tuple $\underline{\epsilon}=(\epsilon_1, \ldots , \epsilon_n)$ of elements  in $\frak m^N.$ 
\end{lemma}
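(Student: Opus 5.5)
We want to show that $\depth(I,M)$ and $\f-depth(I,M)$ cannot drop under a sufficiently small perturbation of the generators of $I$. The plan is to realize each invariant by an explicit maximal sequence in $I$, written in terms of the generators $x_1,\dots,x_n$, and then perturb that sequence using Lemma \ref{L:1c}.

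\textbf{Depth.} Set $t=\depth(I,M)$. If $t=0$ there is nothing to prove, and if $IM=M$ the depth is $\infty$. In the latter case Nakayama shows $I(\underline\epsilon)$ is still the unit ideal on $M$, since $I(\underline\epsilon)+\frak mI=I+\frak m I$.

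So assume $0<t<\infty$. By prime avoidance we can choose an $M$-sequence $y_1,\dots,y_t$ in $I$ in which each $y_j$ is a linear combination of $x_1,\dots,x_n$ with coefficients in $R$. Indeed, the set of $R$-linear combinations $\sum a_kx_k$ is all of $I$, and the standard argument produces each $y_j$ this way (if $R/\frak m$ is finite, pass first to $R[T]_{\frak m R[T]}$, which is faithfully flat and preserves depth).

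Write $y_j=\sum_k a_{jk}x_k$ and put
$$y_j(\underline\epsilon)=\sum_k a_{jk}(x_k+\epsilon_k)=y_j+\delta_j,\qquad \delta_j=\sum_k a_{jk}\epsilon_k\in\frak m^N.$$
Lemma \ref{L:1c} gives an $N$ such that $y_1+\delta_1,\dots,y_t+\delta_t$ is an $M$-sequence whenever all $\delta_j\in\frak m^N$. Each $y_j(\underline\epsilon)$ lies in $I(\underline\epsilon)$, so $\depth(I(\underline\epsilon),M)\ge t$.

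\textbf{f-depth.} Here there are two cases.

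If $\dim M/IM\le 0$, then $\sqrt{I+\Ann M}\supseteq\frak m$. We claim $\dim M/I(\underline\epsilon)M\le 0$ for $N$ large.
\begin{itemize}
\item Choose $s$ with $\frak m^s\subseteq I+\Ann M$, and take $N>s$.
\item Then $\frak m^s\subseteq I(\underline\epsilon)+\frak m^N+\Ann M\subseteq I(\underline\epsilon)+\frak m^{s+1}+\Ann M$.
\item By Nakayama applied to $\frak m^s$ modulo $I(\underline\epsilon)+\Ann M$, we get $\frak m^s\subseteq I(\underline\epsilon)+\Ann M$.
\end{itemize}
Hence $\f-depth(I(\underline\epsilon),M)=\infty$.

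Otherwise $t=\f-depth(I,M)<\infty$. Using prime avoidance over the finitely many primes $\frak p\in\Ass$ with $\dim R/\frak p>0$ at each step, we choose an f-sequence $y_1,\dots,y_t$ of $M$ in $I$ made of linear combinations of the $x_k$. We then perturb as before, now using the f-sequence version of Lemma \ref{L:1c}.

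\textbf{Main obstacle.} The delicate point is making the choice of the $y_j$ compatible with perturbation. The $y_j$ must be fixed before $\underline\epsilon$ is chosen, and $N$ must depend only on the $y_j$, not on $\underline\epsilon$. Expressing the $y_j$ as fixed $R$-linear combinations of the generators achieves this, since then $\delta_j\in\frak m^N$ automatically.

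The residue-field issue in prime avoidance also needs care. Prime avoidance for an ideal $I$ not contained in any of finitely many primes yields an element of $I$ outside them with no field assumption, and every element of $I$ is automatically a combination of the $x_k$. So in fact no extension $R[T]_{\frak m R[T]}$ should be needed.
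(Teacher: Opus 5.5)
Your proof is correct. For $\depth$ and for finite $\f-depth$ it is exactly the paper's argument: fix a maximal-length (f-)sequence $y_1,\dots,y_t$ in $I$, write $y_j=\sum_k a_{jk}x_k$ with fixed coefficients, take $N$ from Lemma \ref{L:1c} for the $y_j$, and note that $\sum_k a_{jk}(x_k+\epsilon_k)=y_j+\delta_j\in I(\underline\epsilon)$ with $\delta_j\in\frak m^N$.

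The one genuine difference is the case $\f-depth(I,M)=\infty$. The paper perturbs an f-sequence of length $d$ in $I$. It then observes that any further elements of $I(\underline\epsilon)$ continue that f-sequence, which implicitly uses that an f-element of a module of positive dimension is a parameter element. You instead show directly, via Nakayama, that $\frak m^s\subseteq I+\Ann M$ implies $\frak m^s\subseteq I(\underline\epsilon)+\Ann M$ once $N>s$, so $\dim M/I(\underline\epsilon)M\leq 0$. Both are valid; yours is more elementary and gives an explicit $N$ in that case.

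Two side remarks:
\begin{itemize}
\item The prime-avoidance and $R[T]_{\frak mR[T]}$ detour is unnecessary, as you yourself conclude, since every element of $I$ is already an $R$-combination of the $x_k$.
\item The case $IM=M$ is vacuous, because $I\subseteq\frak m$ forces $M=0$. The identity $I(\underline\epsilon)+\frak mI=I+\frak mI$ that you invoke there is false in general (take $I=(x)$ and $\epsilon=y^N$ in $K[[x,y]]$), but nothing in the proof depends on it.
\end{itemize}
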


\begin{proof}  Set $\depth (I, M)=r.$ Let $y_1, \ldots , y_r$ be an $M$-sequence in $I$. For each $i\leq r$, write $y_i=\sum_{j=1}^na_{ij}x_j$, where $a_{ij}\in R$. By Lemma \ref{L:1c}, there exists an integer $N>0$ such that $y_1+\epsilon_1, \ldots , y_r+\epsilon_r$ is an $M$-sequence for all $\epsilon_1, \ldots , \epsilon_r\in\frak m^N$. 

Now, let $\underline \epsilon =(\epsilon_1, \ldots , \epsilon_n)$, where $\epsilon_i\in \frak m^N$ for all $i\leq n$. Set $\epsilon'_i=\sum_{j=1}^na_{ij}\epsilon_j$ for $i\leq r.$  Then $$y_i+\epsilon'_i=\sum_{j=1}^na_{ij}(x_j+\epsilon_j)\in I(\underline \epsilon)$$ for all $i\leq r$. Note that the sequence $y_1+\epsilon'_1, \ldots , y_r+\epsilon'_r$ is an $M$-sequence because $\epsilon'_i\in\frak m^N$ for $i\leq r$. Hence $\depth(I(\underline{\epsilon}),M)\geq r$.  

 Now we prove that the f-depth is preserved under small pertubations. Let $r\in\Bbb N$ and suppose that $y_1, \ldots , y_r$ is an f-sequence of $M$ in $I$. Then there exists by Lemma \ref{L:1c} an integer $N>0$ such that $y_1+\epsilon_1, \ldots , y_r+\epsilon_r$ is an f-sequence of $M$ for all $\epsilon_1, \ldots , \epsilon_r\in\frak m^N.$ Now for given $\epsilon_1, \ldots , \epsilon_n\in\frak m^N,$ by similar arguments as in the above proof for the depth, we can find elements  $\epsilon'_1, \ldots , \epsilon'_r\in \frak m^N$ such that $y_1+\epsilon'_1, \ldots , y_r+\epsilon'_r$ is an f-sequence of $M$ in $I(\underline \epsilon).$ So, if $\f-depth (I, M)<\infty$ then for $r=\f-depth (I, M)$ we have  $\f-depth_R(I(\underline \epsilon),M)\geq r.$ Assume that $\f-depth_R(I,M)=\infty.$ Then for $r=d$, we get an f-sequence $y_1+\epsilon'_1, \ldots , y_d+\epsilon'_d$ of $M$ in $I(\underline \epsilon).$ Hence $y_1+\epsilon'_1, \ldots , y_d+\epsilon'_d, y_{d+1}, \ldots , y_t$ is an f-sequence of $M$ in $I(\underline \epsilon)$ for all $t>d$ and all elements $y_{d+1}, \ldots , y_t\in I(\underline \epsilon).$ Therefore, $\f-depth_R(I(\underline \epsilon),M)=\infty$.
\end{proof}

The following example shows that for a given  $N>0$, the $\depth (I(\underline{\epsilon}), M)$ may be strictly bigger than $\depth (I,M)$ for some $n$-tuple $\underline{\epsilon}=(\epsilon_1, \ldots , \epsilon_n)$ of elements  in $\frak m^N$. 

\begin{example} \label{E:1b} {\rm Let $M=R/(u_1) \cap (u_2, u_3)$, where $R=K[[u_1, u_2, u_3, u_4]]$ is the formal power series ring over a field $K$. Let $I=(u_1,u_2)$. It is obvious that $u_1-u_2$ is an $M$-regular element in $I$. Hence   $\depth (I, M)>0.$  It is clear that $\depth (I, R/u_1R)>0$, $\depth (I, R/(u_2,u_3)R)>0$ and $\depth (I, R/(u_1,u_2,u_3)R)=0$. Therefore, from the exact sequence
$$0 \to M \to R/(u_1) \oplus R/(u_2, u_3) \to R/(u_1, u_2, u_3) \to 0,$$ 
we get $H^1_I(M) \neq 0.$ Hence $\depth (I, M)=1$.
		 
Let $N>0$ be an integer. Set $\underline{\epsilon}=(u_4^N, u_4^N).$ Then  $I(\underline{\epsilon})=(u_1+u_4^N, u_2+u_4^N).$  It is clear that $f_1:=u_1-u_2$ is an $M$-regular element in $I(\underline{\epsilon})$.  We have $M/f_1M \cong k[[u_2, u_3, u_4]]/(u_2^2, u_2u_3).$ Hence $f_2:=u_2+u_4^N$ is an  $M/f_1M$-regular element in $I(\underline{\epsilon}).$ Therefore,  
$$\depth (I(\underline{\epsilon}), M)=2 >1=\depth (I,M).$$}
\end{example}

Now we state the main result of this section.

\begin{theorem} \label{T:1bs} Let $I$ be a proper ideal of $R$.  The following statements are true. 
\begin{itemize}
\item[{\rm (a)}] Each sequential sequence of $M$ in $I$ can be extended to a maximal one, all maximal sequential sequences of $M$ in $I$ have the same length. This common length is equal to $\E-depth_R(I, M)$.

\item[{\rm (b)}] $\Ef-depth_R(I,M)<\infty$ if and only if there is a maximal sequential f-sequence of $M$ in $I$. If $\Ef-depth_R(I,M)<\infty$ then all maximal sequential f-sequences of $M$ in $I$ have the same length and this common length is equal to $\Ef-depth_R(I,M)$. If $\dim (M/IM)\geq 2$ then $\Ef-depth_R(I,M)<\infty$. If $\dim (M/IM)=0$ then $\Ef-depth_R(I,M)=\infty.$ 

\item[{\rm (c)}] Let $x_1, \ldots , x_n$  be a system  of generators of $I$. Then there exists an integer $N>0$ such that  $\E-depth (I, M)\leq \E-depth(I(\underline{\epsilon}),M)$ and  $\Ef-depth (I, M)\leq \Ef-depth(I(\underline{\epsilon}),M)$ for all $n$-tuples $\underline{\epsilon}=(\epsilon_1, \ldots , \epsilon_n)$ of elements in $\frak m^N$. 
\end{itemize}
\end{theorem}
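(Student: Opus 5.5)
We first reduce to the case that $R$ is complete. By Remark \ref{R:1}(a), sequential sequences and sequential f-sequences of $M$ in $I$ coincide with those of $\widehat M$ in $I\widehat R$, and both invariants in Definition \ref{D:2} are defined through $\widehat M$. So we may assume $R=\widehat R$. Then $R$ is a quotient of a Gorenstein local ring and Lemma \ref{L:1a} is available.

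For (a), Lemma \ref{L:1a}(b) says that $x_1,\ldots,x_r\in I$ is $M$-sequential if and only if $x_1,\ldots,x_{\min\{i,r\}}$ is a $K^i(M)$-sequence for every $i\in\Defi(M)$. Set $e=\E-depth(I,M)$.
\begin{itemize}
\item[(i)] Every sequential sequence in $I$ has length $r\le e$. Indeed, each $K^i(M)$ with $i\in\Defi(M)$ then contains a regular sequence of length $\min\{i,r\}$ in $I$, so $\depth(I,K^i(M))\ge \min\{i,r\}$. Moreover $r\le d$, since each step requires positive dimension.
\item[(ii)] Any sequential sequence $x_1,\ldots,x_r$ in $I$ with $r<e$ extends by one element in $I$. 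Put $M'=M/(x_1,\ldots,x_r)M$. Iterating Lemma \ref{L:1a}(a) gives $K^{j}(M')\cong K^{j+r}(M)/(x_1,\ldots,x_r)K^{j+r}(M)$ for $j\ge1$. For $j+r\in\Defi(M)$ with $j+r>r$, this module has $I$-depth at least $\min\{j+r,e\}-r\ge1$. Hence every $K^j(M')$ with $j\in\Defi(M')$ has positive $I$-depth, so $I\not\subseteq\bigcup\Ass K^j(M')$. Prime avoidance then gives an element $x\in I$ regular on all of them, and this $x$ is $M'$-sequential by Lemma \ref{L:1a}(a). We also need $\dim M'>0$; this follows from $r<e\le d$ together with $\dim M'=d-r$, since sequential elements are parameters.
\end{itemize}
Combining (i) and (ii), every maximal sequential sequence in $I$ has length exactly $e$, and extension to a maximal one is automatic.

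For (b) we argue the same way using Lemma \ref{L:1a}(c),(d) and f-depth in place of depth. The extension step is the main obstacle. Lemma \ref{L:1a}(c) gives only short exact sequences $0\to K^{i+1}/xK^{i+1}\to K^i(M/xM)\to(0:_{K^i}x)\to0$, where $(0:_{K^i}x)$ has finite length. Since f-depth is governed by the non-Artinian local cohomology $H^t_I$, the finite-length kernels do not change f-depth at positive level. From this we expect $\f-depth(I,K^i(M/xM))\ge \f-depth(I,K^{i+1}(M))-1$. We then induct as in (a), choosing $x\in I$ outside the non-maximal associated primes of the relevant $K^i$ by prime avoidance.
\begin{itemize}
\item If $\dim(M/IM)\ge2$: f-depths of the $K^i$ in $I$ are finite whenever $\dim K^i/IK^i>0$, and the condition $\f-depth\ge\min\{i-1,r\}$ must fail for large $r$ at the top index $i=d$. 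Here $K^d$ has support of dimension $d$ and $\dim K^d/IK^d\ge2>0$ because $\Supp K^d$ contains the top-dimensional components of $\Supp M$. Hence $\Ef-depth<\infty$, and the equivalence with the existence of a maximal sequential f-sequence follows.
\item If $\dim(M/IM)=0$: all f-depths in $I$ are infinite, so $\Ef-depth=\infty$.
\end{itemize}

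For (c), we apply Lemma \ref{L:1d} to each of the finitely many modules $K^i(M)$, $i\le d$, with the generators $x_1,\ldots,x_n$, and take $N$ to be the maximum of the resulting integers. Then $\depth(I(\underline\epsilon),K^i)\ge\depth(I,K^i)$ and $\f-depth(I(\underline\epsilon),K^i)\ge\f-depth(I,K^i)$ for all $i$. Every $r$ admissible in Definition \ref{D:2} for $I$ remains admissible for $I(\underline\epsilon)$; note that $\Defi(M)$ does not depend on the ideal. This gives both inequalities.
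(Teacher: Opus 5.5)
\textbf{Verdict.} There is a genuine gap in (b), in the proof that $\dim(M/IM)\ge 2$ forces $\Ef-depth_R(I,M)<\infty$. Everything else works.

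\textbf{What is fine.} Parts (a) and (c) are correct. In (a), you show every sequential sequence in $I$ has admissible length, and extend by prime avoidance below $\E-depth_R(I,M)$. This differs slightly from the paper's induction on the length of a maximal sequence, and it proves the extension clause explicitly. The extension step in (b) is also sound. However, the inequality $\f-depth(I,K^i(M/xM))\ge \f-depth(I,K^{i+1}(M))-1$ should be proved, not ``expected''. Apply $H^\bullet_I$ to the sequence in Lemma \ref{L:1a}(c) and to multiplication by $x$ on $K^{i+1}(M)$. A module of dimension $\le 0$ has Artinian $H^0_I$ and no higher $H^j_I$, and f-depth is the first index with non-Artinian $H^j_I$. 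The paper gets the corresponding equality from Lu--Tang.

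\textbf{The gap.} You justify finiteness at the top index $i=d$ by claiming $\dim K^d/IK^d\ge 2$. That claim is false: $V(I)$ can meet the top-dimensional components of $\Supp M$ only at $\frak m$, while meeting a lower-dimensional component in dimension $\ge 2$. For example, take $R=K[[a,b,c,e,f,g]]$, $M=R/(e,f,g)\oplus R/(a,b,c,g)$ and $I=(a,b,c)$.
\begin{itemize}
\item Here $d=3$ and $\dim M/IM=2$.
\item But $K^3(M)\cong R/(e,f,g)$, so $K^3(M)/IK^3(M)\cong R/\frak m$ and $\f-depth(I,K^3(M))=\infty$. The condition at $i=d$ never fails.
\item In fact $\Ef-depth_R(I,M)=0$, because $K^2(M)\cong R/(a,b,c,g)$ has a dimension-$2$ associated prime containing $I$.
\end{itemize}
Even when $\dim K^d/IK^d>0$, finiteness of $\f-depth(I,K^d)$ is not enough for your argument; you need $\f-depth(I,K^d)<d-1$.

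\textbf{How to repair it.} The paper uses Remark \ref{R:1}(b): a sequential f-sequence $y_1,\ldots,y_t$ in $I$ is a generalized regular sequence. Since $\dim M/(y_1,\ldots,y_{k-1})M\ge\dim M/IM\ge 2$, each $y_k$ avoids the top-dimensional associated primes and lowers the dimension by one, so $t\le d-2$. Hence a maximal sequential f-sequence in $I$ exists, and your extension step then gives $\Ef-depth_R(I,M)<\infty$.

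Alternatively, choose $i$ with $\dim K^i/IK^i=\dim M/IM\ge 2$. Such an $i$ exists because each $\frak p\in\Ass M$ with $\dim R/\frak p=i$ lies in $\Ass K^i(M)$, so $\Supp(M/IM)=\bigcup_i\Supp(K^i/IK^i)$. Then use $\f-depth(I,K^i)\le\dim K^i-\dim K^i/IK^i\le i-2$.
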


\begin{proof} By Remark \ref{R:1}(a), we can assume that $R$ is complete. 

(a) Let $x_1, \ldots , x_r$ be a maximal sequential sequence of $M$ in $I$. We prove by induction on $r$ that $r=\E-depth_R(I, M)$.  Let $r=0$. Then there is no sequential element of $M$ in $I$. By Lemma \ref{L:1a}(a), there exists $i\in\Defi(M)$ such that $\depth_R(I, K^i(M))=0$. Hence $\E-depth_R(I, M)=0.$ 

Let $r>0.$ Note that $x_2, \ldots x_r$ is a maximal sequential sequence of $M/x_1M$ in $I.$  By induction, $\E-depth_R(I, M/x_1M)=r-1.$ Let $i>1.$ By Lemma \ref{L:1a}(a), $i\in\Defi(M)$ if and only if $i-1\in\Defi (M/x_1M).$  If $i\in\Defi(M)$ then  $x_1$ is a regular element of $K^i(M)$ and  $\depth_R (I, K^{i-1}(M/x_1M))=\depth_R(I, K^i(M))-1$. Note that if $1\in\Defi(M)$ then $\depth_R(I, K^1(M))\geq 1.$ Therefore, 
$$r-1=\E-depth_R(I, M/x_1M)=\E-depth_R(I,M)-1.$$ Hence, $\E-depth_R(I,M)=r$.    

(b) Suppose that there is  a maximal sequential f-sequence $x_1, \ldots , x_r$ of $M$ in $I.$ We prove that by induction on $r$  that $\Ef-depth_R(I, M)=r$. Let $r=0$. Then there is no sequential f-element of $M$ in $I$. So, $\f-depth_R(I, K^i(M))=0$ for some $i>1.$ Hence $\Ef-depth_R(I, M)=0,$ the result is true for $r=0$. Let $r>0.$ Set $\overline M:=M/x_1M$. Then $x_2, \ldots x_r$ is a maximal sequential f-sequence of $\overline M$ in $I.$  By induction, $\Ef-depth_R(I, \overline M)=r-1.$ It means that $\f-depth (I, K^i(\overline M))\geq \min\{i-1,r-1\}$ for all $i>1$ and there exists $i_0\geq r+1$ such that $\f-depth (I, K^{i_0}(\overline M))=r-1$. Now, let $i>1.$ By Lemma \ref{L:1a}(c),  $x_1$ is an f-element of $K^{i+1}(M)$ and there is an exact sequence $$0\to K^{i+1}(M)/x_1K^{i+1}(M)\to K^i(\overline M)\to (0:_{K^i(M)}x_1)\to 0,$$ where  $\dim_R(0:_{K^i(M)}x_1)\leq 0$. It follows that $H^j_I(K^{i+1}(M)/x_1K^{i+1}(M))$  is Artinian  if and only if $H^j_I(K^i(\overline M))$ is Artinian for all $j\geq 0.$ So, we get by R. Lu and Z. Tang \cite[Theorem 3.10]{LT} that 
$$\f-depth (I, K^{i+1}(M)/x_1K^{i+1}(M))=\f-depth (I, K^i(\overline M)).$$ 
Since $\f-depth (I, K^i(\overline M))\geq \min\{i-1,r-1\}$, we have $\f-depth (I, K^{i+1}(M))\geq \min\{i,r\}$ for all $i>1$. Moreover, $\f-depth (I, K^{i_0+1}(M))=\f-depth (I, K^{i_0}(\overline M))+1=r$ for some $i_0\geq r+1$. Note that $\f-depth_R(I, K^2(M))\geq 1$ since $x_1$ is an f-element of $K^2(M)$. Thus,  $\Ef-depth_R(I, M)=r$.

Suppose that there does not exist a maximal sequential f-sequence of $M$ in $I$. Then for any $r\in\Bbb N$,  there exists a sequential f-sequence of $M$ in $I$ of length $r$. So, it follows by Lemma \ref{L:1a}(d) that $\Ef-depth_R(I, M)=\infty$.

Now we consider three cases.

\noindent{\it Case 1: $\dim_R(M/IM))\geq 2$}. By Remark \ref{R:1}(b), each sequential f-sequence of $M$ is a generalized regular sequence. Since $\dim_R(M/IM))\geq 2$, the length of any generalized regular sequence of $M$ in $I$ is at most $d-2.$ Therefore,  there always exists a maximal sequential f-sequence of $M$ in $I$ and its length is equal to $\Ef-depth_R(I, M)$.

\noindent{\it Case 2: $\dim_R(M/IM))=0$}. Let $i>1$ be an integer. Note that $\dim_R(K^i(M)/IK^i(M))\leq 0$ as $\Ann_R(M)\subseteq \Ann_RK^i(M)$.  Hence $\f-depth_R(I, K^i(M))=\infty$ for all $i>1.$ Therefore, $\Ef-depth_R(I,M)=\infty$.  

\noindent{\it Case 3: $\dim_R(M/IM))=1$}. If there exists a maximal sequential f-sequence of $M$ in $I$ of length $r$ then $\Ef-depth_R(I, M)=r$.  Otherwise, $\Ef-depth_R(I, M)=\infty$.

(c) For each $i\in\Defi(M)$, there exists by Lemma \ref{L:1d} an integer $N_i>0$ such that $\depth (I, K^i(M))\leq \depth(I(\underline{\epsilon}), K^i(M))$ for any $n$-tuple $\underline{\epsilon}=(\epsilon_1, \ldots , \epsilon_n)$ of elements in $\frak m^{N_i}.$ Set $N=\max\{N_i\mid i\in\Defi(M)\}.$ Then $\E-depth_R(I,M)\leq \E-depth_R(I(\underline{\epsilon}),M)$ for any $n$-tuple $\underline{\epsilon}=(\epsilon_1, \ldots , \epsilon_n)$ of elements in $\frak m^{N}.$ Similarly, the rest statement follows by using Lemma \ref{L:1d} for the f-depth of $K^i(M)$ in $I$ for $i>1$.  
\end{proof}

When $\dim_R(M/IM)=1$, $\Ef-depth_R(I, M)$ may be infinite or finite. 

\begin{example} \label{E:1c} {\rm Let $R=K[[u_1,u_2,u_3]]$ be the formal power series ring over a field $K$}.
\begin{itemize}
\item[{\rm (a)}] {\rm Let  $M=R/(u_1)\cap (u_2, u_3)$ and $I=(u_2,u_3)R$. Then $\dim_R(M)=2,$ $\dim (M/IM)=1$ and $\dim (K^2(M)/IK^2(M))=0$. Hence $\f-depth_R(I,K^2(R))=\infty$, so $\Ef-depth_R(I,R)=\infty$.}
\item[{\rm (b)}] {\rm Let $M=(u_2,u_3)R$ and $I=(u_2,u_3)R.$ Then $\dim_R(M)=3$ and $\dim_R(M/IM)=1.$ Note that $\Ass_RK^2(M)=\{(u_2,u_3)R\}.$ Hence $\f-depth_R(I, K^2(M))=0$ and there does not exist a sequential f-element of $M$ in $I$. Therefore, $\Ef-depth_R(I, M)=0.$} 
\end{itemize}
\end{example}

\section{Sequentially Cohen-Macaulay locus}
 
Denote by  $\nCM(M)$ and $\nSCM(M)$ respectively the non Cohen-Macaulay locus and the non sequentially Cohen-Macaulay locus of $M$, i.e
\begin{align}\nCM(M)&=\{\frak p\in\Spec(R)\mid M_{\frak p}\ \text{is not  Cohen-Macaulay}\};\notag\\
\nSCM(M)&=\{\frak p\in\Spec(R)\mid M_{\frak p}\ \text{is not sequentially Cohen-Macaulay}\}.\notag\end{align}
 
In general, $\nCM(M)$ is not a closed subset of $\Spec(R)$ under Zariski topology, see \cite[Example 3.1]{BS1}. If $R$ is a quotient of a Gorenstein local ring then $\nCM(M)$ is closed (see \cite{Sc1}) and the dimension of $\nCM(M)$ was investigated by N. T. Cuong \cite{C}, \cite{C1}. 

Throughout this section, we use the following notations.
\begin{notation} \label{N:2} {\rm For a finitely generated $R$-module $M$ of dimension $d$, we denote by $U_M(0)$  the largest submodule of $M$ of dimension less than $d$. Set $\frak a_i(M):=\Ann_RH^i_{\frak m}(M)$ for all integers $i\leq d$ and  
$\frak a(M):=\frak a_0(M). \frak a_1(M)\ldots \frak a_{d-1}(M).$
Set $D(M):=\{\dim (R/\frak p)\mid \frak \p\in\Ass_R(M)\}.$ We note that if $R$ is a quotient of a Gorenstein local ring then $\dim_RK^i(M)\leq i$  for all $i$. Moreover, $\dim_RK^i(M)=i$ if and only if $i\in D(M)$.} 
\end{notation}
 
We recall the some results about non Cohen-Macaulay locus and non sequentially Cohen-Macaulay locus, see \cite{C1, CNN, NNK}. 

\begin{lemma} \label{L:2} Let $U_M(0)$, $\frak a_i(M)$, $\frak a(M)$ be defined as in Notation \ref{N:2}.  Suppose that  $R$ is a quotient of a Cohen-Macaulay local ring. Then
\begin{enumerate}
\item[{\rm (a)}] $\nCM(M)=\underset{0\leq i<j\leq d}{\bigcup}\Var \big(\frak a_i(M)+\frak a_j(M)\big).$ In particular, $\nCM(M)$ is  a closed subset of $\Spec(R)$ and $\nCM(M)\subseteq  \Var(\frak a(M))$. The equality holds true if $M$ is equidimensional.
\item[{\rm (b)}] $\dim (R/\frak a(M))=\max\{\dim\nCM(M), \dim_RU_M(0)\}.$ 
\item[{\rm (c)}] $\nCM(M)=\{\frak P\cap R\mid \frak P\in\nCM(\widehat M)\}$ and $\dim\nCM(M)=\dim\nCM(\widehat M).$
\end{enumerate}
\end{lemma}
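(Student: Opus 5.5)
The plan is to prove all three parts of Lemma \ref{L:2} from one tool: the shifted localization principle for attached primes, which holds when $R$ is a quotient of a Cohen-Macaulay local ring (\cite{CNN}). For $\frak p\in\Supp(M)$ and every integer $k$ it reads
$$\Att_{R_{\frak p}}H^{k}_{\frak pR_{\frak p}}(M_{\frak p})=\{\frak qR_{\frak p}\mid \frak q\in \Att_RH^{k+\dim(R/\frak p)}_{\frak m}(M),\ \frak q\subseteq\frak p\}.$$
An Artinian module is nonzero exactly when it has an attached prime, and by Lemma \ref{L:1}(a) the minimal attached primes are the minimal primes of the annihilator. Hence $H^k_{\frak pR_{\frak p}}(M_{\frak p})\neq 0$ if and only if $\frak p\supseteq\frak a_{k+\dim(R/\frak p)}(M)$.

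For (a), I will use that $M_{\frak p}$ is Cohen-Macaulay exactly when exactly one local cohomology module $H^k_{\frak pR_{\frak p}}(M_{\frak p})$ is nonzero. So $\frak p\in\nCM(M)$ if and only if $\frak p$ contains $\frak a_i(M)$ and $\frak a_j(M)$ for two indices $i<j$ with $i,j\leq d$, which is the displayed formula. Primes outside $\Supp(M)$ contain no $\frak a_i(M)$, so they cause no trouble. Closedness is then immediate, and $\nCM(M)\subseteq\Var(\frak a(M))$ holds because $i<d$. For equality in the equidimensional case, take $\frak p\supseteq\frak a(M)$. Then $\frak p\supseteq\frak a_i(M)$ for some $i<d$, so $\frak p\in\Supp(M)$. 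By Lemma \ref{L:1}(d) and equidimensionality, every attached prime of $H^d_{\frak m}(M)$ is a minimal prime of $M$ of dimension $d$, and every minimal prime has dimension $d$. Therefore $\Var(\frak a_d(M))=\Supp(M)$, and $\frak p\in\Var(\frak a_i(M)+\frak a_d(M))$.

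For (b), the inequality $\geq$ has two parts. First, $\nCM(M)\subseteq\Var(\frak a(M))$ by (a). Second, if $\frak q\in\Ass_R(U_M(0))$ then $\frak q\in\Ass_R(M)$ with $j=\dim(R/\frak q)<d$, so $\frak q\in\Att_RH^j_{\frak m}(M)$ by Lemma \ref{L:1}(c) and hence $\frak q\supseteq\frak a_j(M)\supseteq\frak a(M)$. For $\leq$, take $\frak p\supseteq\frak a(M)$ with $\frak p\notin\nCM(M)$. Since $\frak p\supseteq\frak a_i(M)$ for some $i<d$, the Cohen-Macaulay module $M_{\frak p}$ has dimension $i-\dim(R/\frak p)$. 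Choose an associated prime $\frak q\subseteq\frak p$ of $M$. Cohen-Macaulayness gives $\dim(R_{\frak p}/\frak qR_{\frak p})=i-\dim(R/\frak p)$, and catenarity of $R$ (a quotient of a Cohen-Macaulay ring) then gives $\dim(R/\frak q)=i<d$. Thus $\frak q\in\Ass_R(U_M(0))$ and $\frak p\in\Supp(U_M(0))$. So $\Var(\frak a(M))\subseteq\nCM(M)\cup\Supp(U_M(0))$, which gives the inequality.

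For (c), I will use $H^i_{\frak m}(M)\cong H^i_{\frak m\widehat R}(\widehat M)$ and Lemma \ref{L:1}(b) to get $\Var(\frak a_i(\widehat M))\cap R$-contractions equal to $\Var(\frak a_i(M))$; more precisely, $\frak a_i(\widehat M)\cap R=\frak a_i(M)$ and the minimal primes correspond. Combined with the formula in (a) for both $M$ and $\widehat M$ ($\widehat R$ is again a quotient of a Cohen-Macaulay ring), this gives $\nCM(M)$ as the contraction of $\nCM(\widehat M)$, using lying over and going down for the faithfully flat map $R\to\widehat R$. The dimension equality follows from the dimension preservation of contraction for closed sets under completion. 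The main obstacle is justifying the shifted localization principle, specifically its use of catenarity and the existence of a dualizing-type structure. If needed I would first pass to the case of a quotient of a Gorenstein ring via Kawasaki's theorem and use $\Att_RH^i_{\frak m}(M)=\Ass_RK^i(M)$ together with localization of deficiency modules.
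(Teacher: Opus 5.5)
The paper does not prove this lemma; it only quotes it from the literature it cites. Your derivation of (a) and (b) from the shifted localization principle, i.e.\ $H^k_{\frak pR_{\frak p}}(M_{\frak p})\neq 0\iff \frak p\supseteq\frak a_{k+\dim(R/\frak p)}(M)$, is correct. It is in substance the pseudo-support argument of those sources.

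One small correction in the equidimensional case of (a). The inclusion you actually need, $\Supp(M)\subseteq\Var(\frak a_d(M))$, comes from Lemma~\ref{L:1}(c): every minimal prime of $M$ has dimension $d$, so it is attached to $H^d_{\frak m}(M)$ and contains $\frak a_d(M)$. Lemma~\ref{L:1}(d) does not give this, and your claim about all attached primes of $H^d_{\frak m}(M)$ is not needed.

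\textbf{The gap is in (c).} One inclusion works as you say. From (a) for $\widehat M$ and $M$, together with $\frak a_i(\widehat M)\cap R=\frak a_i(M)$, you get $\{\frak P\cap R\mid\frak P\in\nCM(\widehat M)\}\subseteq\nCM(M)$.

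The reverse inclusion does not follow. From $\frak p\supseteq\frak a_i(M)+\frak a_j(M)$ you need a single prime $\frak P$ over $\frak p$ with $\frak P\supseteq\frak a_i(\widehat M)+\frak a_j(\widehat M)$. Your tools do not provide one:
\begin{itemize}
\item Lemma~\ref{L:1}(b) only says each $\frak q\in\Att_RH^i_{\frak m}(M)$ is the contraction of \emph{some} attached prime over $\widehat R$.
\item Going down produces, for $\frak P$ over $\frak p$, some $\frak Q'\subseteq\frak P$ over $\frak q$, but nothing forces $\frak Q'\supseteq\frak a_i(\widehat M)$.
\item You cannot go up from the attached prime over $\frak q$ to a prime over $\frak p$.
\end{itemize}
What you really need is $\sqrt{\frak a_i(M)\widehat R}=\sqrt{\frak a_i(\widehat M)}$, i.e.\ $\Var(\frak a_i(\widehat M))$ is the full preimage of $\Var(\frak a_i(M))$. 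This is true under the hypothesis, but it is not a formal consequence of Lemma~\ref{L:1}(b), lying over and going down, since these hold over every local ring. Take a Nagata-type two-dimensional local domain whose completion has minimal primes of dimensions $2$ and $1$. There $\frak a_2(R)=0$, while $\Var(\frak a_2(\widehat R))\neq\Spec(\widehat R)$. So the equality uses universal catenarity and Cohen--Macaulay formal fibres in an essential way.

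The dimension claim has the same defect. Contraction of a closed set only yields $\dim\nCM(\widehat M)\le\dim\nCM(M)$, unless you know $\nCM(\widehat M)$ is the whole preimage.

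\textbf{A direct repair.} Let $\frak P$ lie over $\frak p\in\Supp(M)$. The map $R_{\frak p}\to\widehat R_{\frak P}$ is flat local and $(\widehat M)_{\frak P}\cong M_{\frak p}\otimes_{R_{\frak p}}\widehat R_{\frak P}$. So depth and dimension both increase by those of the fibre $\widehat R_{\frak P}/\frak p\widehat R_{\frak P}$. Hence if $M_{\frak p}$ is not Cohen--Macaulay, then $(\widehat M)_{\frak P}$ is not Cohen--Macaulay for \emph{every} $\frak P$ over $\frak p$.

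Combined with your inclusion, this shows $\nCM(\widehat M)$ is exactly the preimage $\Var(J\widehat R)$ of $\nCM(M)=\Var(J)$. Lying over then gives the set equality, and $\dim\widehat R/J\widehat R=\dim R/J$ gives the dimension equality.

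Finally, your contingency plan of reducing to a quotient of a Gorenstein ring ``via Kawasaki'' does not work. Kawasaki characterizes two different classes:
\begin{itemize}
\item quotients of Cohen--Macaulay local rings: universally catenary with Cohen--Macaulay formal fibres;
\item quotients of Gorenstein local rings: rings admitting a dualizing complex.
\end{itemize}
The first class is strictly larger; a Cohen--Macaulay local ring without a canonical module is an example. The shifted localization principle for quotients of Cohen--Macaulay rings should simply be quoted, as the paper's sources do.
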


 \begin{lemma} \label{L:2a} Let $H^0_{\frak m}(M)=D_t\subset \ldots \subset D_1\subset D_0=M$ be the dimension filtration of $M$.
 \begin{enumerate}
 \item[{\rm (a)}] If $R$ is catenary then $\nSCM(M)=\bigcup_{i=1}^t\nCM(D_{i-1}/D_i).$
 \item[{\rm (b)}] If $R$ is a quotient of a Cohen-Macaulay local ring then $\nSCM(M)$ is closed under Zariski topology, $\nSCM(M)=\{\frak P\cap R\mid \frak P\in\nSCM(\widehat M)\}$ and 
$\dim\nSCM(M)=\dim\nSCM(\widehat M).$
 \end{enumerate}
 \end{lemma}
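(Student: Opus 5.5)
The plan is to prove (a) by comparing the localization $(D_i)_{\frak p}$ of the dimension filtration with the dimension filtration of $M_{\frak p}$. Part (b) will then follow from (a) together with Lemma \ref{L:2}.

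\textbf{Part (a).} First I record the standard description of the filtration. Put $d_i=\dim_R D_i$. Then
$$\Ass_R(D_{i-1}/D_i)=\{\frak q\in\Ass_R(M)\mid \dim(R/\frak q)=d_{i-1}\}$$
and $\Ass_R D_i=\{\frak q\in\Ass_R(M)\mid \dim(R/\frak q)\leq d_i\}$. Localizing at $\frak p$ gives
$$\Ass_{R_{\frak p}}(D_{i-1}/D_i)_{\frak p}=\{\frak qR_{\frak p}\mid \frak q\subseteq\frak p,\ \frak q\in\Ass_R(M),\ \dim(R/\frak q)=d_{i-1}\}.$$

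For the inclusion ``$\supseteq$'' I show: if every nonzero $(D_{i-1}/D_i)_{\frak p}$ is Cohen-Macaulay, then $M_{\frak p}$ is sequentially Cohen-Macaulay. I use Schenzel's characterization that a module is sequentially Cohen-Macaulay if and only if it has some finite filtration whose quotients are Cohen-Macaulay of strictly increasing dimensions. The localized chain $(D_t)_{\frak p}\subseteq\cdots\subseteq (D_0)_{\frak p}=M_{\frak p}$ has Cohen-Macaulay quotients, but their dimensions are only weakly monotone. I fix this in two steps.
\begin{itemize}
\item[(i)] Delete zero quotients.
\item[(ii)] Merge consecutive quotients of equal dimension, using that an extension of two Cohen-Macaulay modules of the same dimension is Cohen-Macaulay of that dimension (depth lemma).
\end{itemize}
This yields a filtration of the required type.

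For ``$\subseteq$'', suppose $M_{\frak p}$ is sequentially Cohen-Macaulay with dimension filtration $(E_j)$. I need to show each $(D_{i-1}/D_i)_{\frak p}$ is Cohen-Macaulay. This is where catenarity enters. In a catenary local ring, for $\frak q\subseteq\frak p$ the number $\dim R_{\frak p}/\frak qR_{\frak p}=\docao(\frak p/\frak q)$ is compatible with $\dim(R/\frak q)$ in the sense needed. I want to conclude that each $E_j$ is one of the $(D_i)_{\frak p}$, so that $(D_i)_{\frak p}$ refines $(E_j)$. Concretely, $(D_i)_{\frak p}$ is the largest submodule of $M_{\frak p}$ supported on the localized associated primes coming from $\frak q$ with $\dim R/\frak q\leq d_i$. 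Then each $(D_{i-1}/D_i)_{\frak p}$ sits inside a quotient $E_{j-1}/E_j$ as one graded piece of a filtration of a Cohen-Macaulay module by pieces with disjoint associated primes of the same dimension. I expect such a piece to be Cohen-Macaulay, either via a $\Gamma$-torsion splitting argument or via Lemma \ref{L:1a}-type deficiency module arguments.

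I expect this step to be the main obstacle: the precise matching of dimensions after localization. The first thing I will try is to reduce, via the dimension formula in catenary rings, to checking that the map $\frak q\mapsto\docao(\frak p/\frak q)$ is strictly order-compatible with $\frak q\mapsto\dim(R/\frak q)$ on $\Ass_R(M)\cap\Var$-below-$\frak p$. Failing that, I will argue directly with unmixedness of the Cohen-Macaulay quotients $E_{j-1}/E_j$.

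\textbf{Part (b).} A quotient of a Cohen-Macaulay local ring is universally catenary, so (a) applies to both $R$ and $\widehat R$. The formal fibres of such a ring are Cohen-Macaulay, so $\dim(\widehat R/\frak P)=\dim(R/\frak p)$ for $\frak P\in\Ass_{\widehat R}(\widehat M)$ lying over $\frak p$. Hence $\widehat{D_i}$ is exactly the dimension filtration of $\widehat M$, and $\widehat{D_{i-1}}/\widehat{D_i}\cong\widehat{D_{i-1}/D_i}$. By (a) and Lemma \ref{L:2}(a), $\nSCM(M)$ is a finite union of closed sets, hence closed. The description of $\nSCM(M)$ as the contraction of $\nSCM(\widehat M)$ comes from applying Lemma \ref{L:2}(c) to each $D_{i-1}/D_i$ and taking unions. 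The dimension equality $\dim\nSCM(M)=\dim\nSCM(\widehat M)$ then follows from the termwise equalities $\dim\nCM(D_{i-1}/D_i)=\dim\nCM(\widehat{D_{i-1}/D_i})$.
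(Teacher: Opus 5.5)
The paper does not prove this lemma; it quotes it from the literature, so I am judging your argument on its own. Part (b) is fine as outlined. The equality $\dim\widehat R/\frak P=\dim R/\frak p$ for $\frak P\in\Ass_{\widehat R}(\widehat M)$ can also be read off from Lemma~\ref{L:1}(b)--(d), without formal fibres. Part (a), however, has a genuine gap at the step you flag yourself.

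Your inclusions are also labelled backwards. Showing that Cohen-Macaulayness of every nonzero $(D_{i-1}/D_i)_{\frak p}$ forces $M_{\frak p}$ to be sequentially Cohen-Macaulay proves $\nSCM(M)\subseteq\bigcup_i\nCM(D_{i-1}/D_i)$, not the reverse.

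For the other inclusion, your main plan rests on an expectation: that a piece of a Cohen-Macaulay module, cut out by disjoint sets of associated primes of the same dimension, is Cohen-Macaulay. This is false. In $S=K[[x,y,z,w]]$, the complete intersection $E=S/(xz,yw)$ is Cohen-Macaulay of dimension $2$, and $(xz,yw)=(x,y)\cap(z,w)\cap(x,w)\cap(y,z)$. The submodule $A=\big((x,y)\cap(z,w)\big)/(xz,yw)$ has $\Ass_S A\subseteq\{(x,w),(y,z)\}$. Yet $E/A\cong S/\big((x,y)\cap(z,w)\big)$ has depth $1$, so it is not Cohen-Macaulay. Your refinement claim that each $E_j$ is some $(D_i)_{\frak p}$ is likewise left unproved.

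The missing ingredient is the fact you only allude to. For primes $\frak q\subseteq\frak p$ in the catenary local ring $R$, one has $\dim R/\frak q=\docao(\frak p/\frak q)+\dim R/\frak p$, because all saturated chains from $\frak q$ to $\frak m$ have the same length. The paper uses exactly this in the proof of Theorem~\ref{T:1}.

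Combined with your description of $\Ass_R(D_{i-1}/D_i)$, it gives the following. Whenever $(D_{i-1}/D_i)_{\frak p}\neq 0$, every associated prime $\frak qR_{\frak p}$ of it has dimension exactly $d_{i-1}-\dim R/\frak p$, and this number strictly decreases in $i$. So, after deleting repetitions, each quotient of the chain $((D_i)_{\frak p})_i$ has all its associated primes of a single dimension, and these dimensions strictly decrease along the chain.

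The standard argument then shows that this chain \emph{is} the dimension filtration of $M_{\frak p}$: a submodule of dimension $<e$ maps to zero in a module all of whose associated primes have dimension $\geq e$. Both inclusions of (a) follow at once, since $M_{\frak p}$ is sequentially Cohen-Macaulay if and only if the nonzero $(D_{i-1}/D_i)_{\frak p}$ are Cohen-Macaulay. No refinement, no merging step (ii), and no appeal to Schenzel's characterization via arbitrary filtrations is needed. The same formula is also what your first direction silently uses when it asserts that the localized dimensions are weakly monotone; without it there is no reason for any monotonicity.
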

  
  The following theorem is the first main result of this paper that gives the interrelatedness between the non sequentially Cohen-Macaulay locus of $M$ and the support as well as the non Cohen-Macaulay locus of deficiency modules of $M$.   

\begin{theorem} \label{T:1} Let $R$ be a quotient of a Gorenstein local ring. Set $K^i:=K^i(M)$ for all integers $i\geq 0.$ Let $D(M)$ and $U_{K^i}(0)$ be defined as in Notation \ref{N:2}. Then  we have
\begin{enumerate}
\item[{\rm (a)}] $\nSCM(M)=X\cup Y\cup Z$, where  $X=\underset{i\notin D(M)}{\bigcup}\Supp_R(K^i)$, $Y=\underset{i\in D(M)}{\bigcup} \nCM (K^i)$ and $Z=\underset{i\in D(M)}{\bigcup}\Supp_R(U_{K^i}(0))\setminus\Supp_R(K^i\big/U_{K^i}(0))$.    
\item[{\rm (b)}] $\dim\nSCM(M)=\max\{q_1, q_2\},$ where $q_1=\underset{i\notin D(M)}{\max}\dim_R(K^i)$, $q_2=\underset{i\in D(M)}{\max}\dim (R/\frak a(K^i))$.    
\end{enumerate}
\end{theorem}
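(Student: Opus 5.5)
We would prove (a) by localizing the deficiency modules and using the local duality characterization of sequential Cohen--Macaulayness. Since $R$ is a quotient of a Gorenstein local ring, $M_{\frak p}$ is sequentially Cohen--Macaulay if and only if each nonzero deficiency module $K^j(M_{\frak p})$ is either zero or Cohen--Macaulay of dimension $j$. This is Schenzel's criterion; it can also be read off Theorem \ref{L:1cbs} together with Lemma \ref{L:1a}(b), because $\E-depth=\dim$ forces $\depth K^i\geq i\geq\dim K^i$. Localization of deficiency modules gives $K^i(M)_{\frak p}\cong K^{i-\dim(R/\frak p)}(M_{\frak p})$, using a dualizing complex and the catenarity of $R$. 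Hence $\frak p\notin\nSCM(M)$ if and only if, for every $i$ with $\frak p\in\Supp_R(K^i)$, the module $(K^i)_{\frak p}$ is Cohen--Macaulay of dimension $i-\dim(R/\frak p)$.

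The first step is to translate this into global conditions on $K^i$. Fix $i$ and $\frak p\in\Supp(K^i)$. Since $\dim K^i\leq i$ and $K^i$ satisfies Serre-type conditions (every associated prime $\frak q$ of $K^i$ has $\dim R/\frak q$ equal to $\dim K^i$, up to the lower-dimensional part $U_{K^i}(0)$), the requirement $\dim (K^i)_{\frak p}=i-\dim R/\frak p$ says exactly that some prime $\frak q\subseteq\frak p$ in $\Supp K^i$ has $\dim R/\frak q=i$.

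If $i\notin D(M)$, then $\dim K^i<i$ by Notation \ref{N:2}, so no such $\frak q$ exists. Thus every $\frak p\in\Supp K^i$ lies in $\nSCM(M)$, which gives $X\subseteq\nSCM(M)$.

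If $i\in D(M)$, then $\dim K^i=i$ and the condition becomes: $(K^i)_{\frak p}$ is Cohen--Macaulay and $\frak p$ lies in $\Supp(K^i/U_{K^i}(0))$ rather than only in the support of the lower-dimensional part. A Cohen--Macaulay localization is unmixed, so it has no embedded or lower-dimensional components. Therefore the failure set is precisely $\nCM(K^i)\cup(\Supp U_{K^i}(0)\setminus\Supp(K^i/U_{K^i}(0)))$, with the dimension condition failing exactly on the second set. Combining the two cases yields $\nSCM(M)=X\cup Y\cup Z$. The delicate point is the equality $\dim (K^i)_{\frak p}=\dim K^i-\dim R/\frak p$ when $\frak p\in\Supp(K^i/U_{K^i}(0))$. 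This holds because $R$ is catenary and equidimensional locally over the relevant components, since $R$ is a quotient of a Gorenstein ring.

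For (b), we compute dimensions. Clearly $\dim X=q_1$. For $i\in D(M)$, Lemma \ref{L:2}(b) gives $\dim(R/\frak a(K^i))=\max\{\dim\nCM(K^i),\dim U_{K^i}(0)\}$. We have $\dim Y\leq\max_{i\in D(M)}\dim\nCM(K^i)$. Also $Z\subseteq\Supp U_{K^i}(0)$, and $Z$ contains the minimal primes of $\Supp U_{K^i}(0)$ of maximal dimension: such a prime $\frak p$ either lies outside $\Supp(K^i/U_{K^i}(0))$, or lies inside it, in which case $(K^i)_{\frak p}$ is not unmixed and so $\frak p\in\nCM(K^i)$. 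Hence $\dim(Y\cup Z)$ for this $i$ equals $\dim R/\frak a(K^i)$, which gives $\dim\nSCM(M)=\max\{q_1,q_2\}$.

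The main obstacle is the localization compatibility $K^i(M)_{\frak p}\cong K^{i-\dim R/\frak p}(M_{\frak p})$ together with the resulting dimension bookkeeping. We would prove it via local duality for a normalized dualizing complex of a Gorenstein cover, whose shift under localization is $\dim R/\frak p$.
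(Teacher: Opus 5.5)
Your proposal is correct and follows essentially the same route as the paper: for (a), Schenzel's criterion for sequential Cohen--Macaulayness of $M_{\frak p}$, the localization $(K^i)_{\frak p}\cong K^{i-\dim(R/\frak p)}(M_{\frak p})$, and the formula $\dim R/\frak q=\dim R/\frak p+\docao(\frak p/\frak q)$; for (b), Lemma \ref{L:2}(b) together with the observation that a minimal prime of $U_{K^i}(0)$ of maximal dimension lies either outside $\Supp_R(K^i/U_{K^i}(0))$ or in $\nCM(K^i)$. The one slip is verbal: in (b) such primes lie in $Y\cup Z$, not necessarily in $Z$, exactly as your own case split shows.
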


\begin{proof} (a). Let $\frak p \in \nSCM (M).$ By \cite[Theorem 5.5]{Sch},  there exists $j\leq\dim_{R_{\frak p}}(M_{\frak p})$ such that $K^j(M_{\frak p})\neq 0$ and  $K^j(M_{\frak p})$ is not Cohen-Macaulay of dimension $j$.  Set $i=j+\dim (R/\frak p).$ We devide into two cases.

  {\it Case 1: $i\notin D(M)$}. We have $(K^i)_{\frak p}\cong K^j(M_{\frak p})\neq 0$. Hence $\frak p\in X$.

{\it Case 2: $i\in D(M)$}. Assume that $\dim_{R_{\frak p}} K^j(M_\frak p)=j$. Then $(K^i)_{\frak p}\cong K^j(M_\frak p)$ which are not Cohen-Macaulay. Hence $\frak p \in Y.$ So we can  assume that $\dim_{R_{\frak p}} K^j(M_\frak p)<j.$ If $\frak p \in \Supp (K^i/ U_{K^i}(0))$ then $\frak p\supseteq \frak q$ for some  $\frak q\in\Ass_R(K^i/ U_{K^i}(0)).$ Hence $\dim (R/\frak q)=i$ and $\docao (\frak p/\frak q)\leq \dim_{R_{\frak p}} K^j(M_\frak p)<j.$  As $R$ is catenary, $$i=\dim (R/\frak q)=\dim (R/\frak p)+\docao (\frak p/\frak q)<\dim (R/\frak p)+j=i,$$ this is impossible. Hence $\frak p \notin \Supp_R (K^i/ U_{K^i}(0)).$ Note that $\frak p\in\Supp_R(K^i)$ as $K^j(M_{\frak p})\neq 0$. Hence $\frak p \in \Supp_R U_{K^i}(0),$ i.e $\frak p\in Z.$ 

From Case 1 and Case 2, we have $\nSCM (M)\subseteq X\cup Y\cup Z.$ We prove the converse inclusion. Let $\frak p\in X.$ Then there exists $i\notin D(M)$ such that $\frak p\in\Supp_R (K^i)$. Set $j=i-\dim (R/\frak p).$ Then $K^j(M_{\frak p})\neq 0$ and  $$\dim_{R_{\frak p}}(K^j(M_{\frak p})\leq \dim_R(K^i)-\dim (R/\frak p)<i-\dim (R/\frak p)=j.$$ Therefore $M_{\frak p}$ is not sequentially Cohen-Macaulay by \cite[Theorem 5.5]{Sch}, i.e $\frak p\in\nSCM(M)$.

Let $\frak p\in Y.$ Then $\frak p\in\nCM (K^i)$ for some $i\in D(M)$. Hence  $(K^i)_{\frak p}$ is not Cohen-Macaulay. Set $j=i-\dim (R/\frak p).$ Then $K^j(M_{\frak p})$ is not Cohen-Macaulay. Hence $\frak p\in\nSCM(M)$ by \cite[Theorem 5.5]{Sch}.

Let $\frak p\in Z.$  Then  $\frak p\in\Supp_R(U_{K^i}(0))$ and $\frak p\notin\Supp_R(K^i/U_{K^i}(0))$ for some $i\in D(M)$. Hence $\dim_R(K^i)=i$ and  $(K^i)_{\frak p} \cong (U_{K^i}(0))_{\frak p}.$ Set $j=i-\dim (R/\frak p).$ Then $K^j(M_{\frak p})\neq 0$ and
 $$\dim_{R_{\frak p}}(K^j(M_{\frak p}))=\dim_{R_{\frak p}}((U_{K^i}(0))_{\frak p})\leq \dim_R(U_{K^i}(0))-\dim (R/\frak p)<i-\dim (R/\frak p)=j.$$ Hence $\frak p\in\nSCM(M)$ by \cite[Theorem 5.5]{Sch}.

(b). For each $i\in D(M)$, we have by Lemma \ref{L:2}(b) that 
$$\dim \big(R/\frak a(K^i)\big)=\max\{\dim_R\nCM(K^i), \dim_RU_{K^i}(0)\}.$$  Therefore,  we get by assertion (a) that $\dim \nSCM(M)\leq \max\{q_1, q_2\}$ and $q_1\leq \dim \nSCM(M).$ Let $i\in D(M)$ such that $q_2=\dim \big(R/\frak a(K^i)\big).$ Hence 
$$q_2=\max\{\dim_R\nCM(K^i), \dim_R(U_{K^i}(0))\}.$$ If $q_2=\dim_R\nCM(K^i)$ then we get by assertion (a) that $q_2\leq \dim \nSCM(M).$ So, we assume that $q_2=\dim_R(U_{K^i}(0)).$ Then  $q_2=\dim (R/\frak p)$ for some $\frak p\in \min\Ass_RU_{K^i}(0).$  Hence $H^0_{\frak p R_{\frak p}}\big(U_{K^i}(0)_{\frak p}\big)\neq 0.$ If $\frak p\in Z$ then $\frak p\in\nSCM(M)$ by assertion (a). It follows that  $q_2=\dim (R/\frak p)\leq \dim \nSCM(M)$. So, we assume that $\frak p\notin Z.$ Then $\frak p\in\Supp_R(K^i/U_{K^i}(0)).$  Hence $\frak p\supseteq\frak q$ for some $\frak q\in \Ass_R(K^i/U_{K^i}(0))$. Note that $\dim (R/\frak q)=\dim_R(K^i)=i$ as $i\in D(M)$. Since $\dim (R/\frak p)=\dim_R(U_{K^i}(0))<i,$ we have $\frak p\neq \frak q$. Hence $\dim_{R_{\frak p}}(K^i)_{\frak p}>0.$ As $H^0_{\frak p R_{\frak p}}\big(U_{K^i}(0)_{\frak p}\big)\neq 0,$ it follows that $(K^i)_{\frak p}$ is not Cohen-Macaulay, i.e $\frak p\in Y.$  Therefore, $q_2=\dim (R/\frak p)\leq \dim \nSCM(M).$
\end{proof}

We recall an interesting result of L. Ma, P. H. Quy and I. Smirnov \cite[Theorem 4.6]{MQS} on the  non Cohen-Macaulay locus with respect to small pertubations of an f-sequence. 

\begin{theorem} \label{P:2b} Suppose $R$ is a quotient of a Cohen-Macaulay local ring. Let $x_1, \ldots , x_r$ be an f-sequence of $M$. If $M$ is equidimensional then there exists an integer $N>0$ such that 
$$\dim\nCM(M/(x_1, \ldots , x_r)M)\geq \dim\nCM(M/(x_1+\epsilon_1, \ldots , x_r+\epsilon_r)M)$$ for all $\epsilon_1, \ldots , \epsilon_n\in \frak m^N$.
\end{theorem}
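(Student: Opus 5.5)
We first reduce to the case where $R$ is complete, and hence a quotient of a Gorenstein local ring. This is allowed by Lemma \ref{L:2}(c), since $\dim\nCM$ does not change under completion, and by Remark \ref{R:1}(a) together with the analogous statement for f-sequences. Completion also keeps $M$ equidimensional, because $R$ is a quotient of a Cohen-Macaulay ring. We then argue by induction on $r$. The inductive step needs some care, and this is discussed at the end. The core case is $r=1$, with $x$ an f-element of $M$. By Lemma \ref{L:1c}, $x+\epsilon$ is again an f-element of $M$ for all $\epsilon\in\frak m^{N_0}$.

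The dimension of the non Cohen-Macaulay locus is controlled through the deficiency modules. For any finitely generated module $L$ of dimension $n$ over a complete quotient of a Gorenstein ring, we have $\dim (R/\frak a_i(L))=\dim_R K^i(L)$ by local duality. So Lemma \ref{L:2}(b) gives
$$\dim\nCM(L)\leq \dim(R/\frak a(L))=\max_{i<n}\dim_R K^i(L),$$
with equality when $L$ is equidimensional, by Lemma \ref{L:2}(a). For $L=M/yM$ with $y\in\{x,\,x+\epsilon\}$ an f-element of $M$, the exact sequences of \cite[Lemma 2.4]{DN}, dualized as in the proof of Lemma \ref{L:1a}(c), give
$$0\to K^{i+1}(M)/yK^{i+1}(M)\to K^i(M/yM)\to (0:_{K^i(M)}y)\to 0 .$$
Here $0:_{K^i(M)}y$ has dimension at most $0$ for $i\geq 1$, since $y$ is an f-element of $M$ and of the $K^i(M)$; this uses the relation $\Ass K^i(M)=\Att H^i_{\frak m}(M)$ together with the equidimensionality of $M$. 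Hence $\dim K^i(M/yM)$ equals $\dim K^{i+1}(M)/yK^{i+1}(M)$, up to a possible finite-length contribution.

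Next we compare $\dim K^{i+1}(M)/(x+\epsilon)K^{i+1}(M)$ with $\dim K^{i+1}(M)/xK^{i+1}(M)$. Put $N_i=K^{i+1}(M)$ and $s=\dim N_i/xN_i$, and choose $y_1,\dots,y_s$ such that $(x,y_1,\dots,y_s)+\Ann N_i$ is $\frak m$-primary, say containing $\frak m^{k}$. If $\epsilon\in\frak m^{k+1}$, then Nakayama's lemma shows that $(x+\epsilon,y_1,\dots,y_s)+\Ann N_i$ is still $\frak m$-primary. Therefore $\dim N_i/(x+\epsilon)N_i\leq s$. Only finitely many $i$ occur, so one $N$ works for all of them. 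Combining this with the previous paragraph yields
$$\dim(R/\frak a(M/(x+\epsilon)M))\leq \dim(R/\frak a(M/xM)),$$
apart from the finite-length terms, which only matter when the relevant dimensions are $\leq 0$ and can be handled directly.

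The main obstacle is converting this bound on $\dim R/\frak a$ into the required bound on $\dim\nCM$. The quotient $M/xM$ need not be equidimensional, so Lemma \ref{L:2}(b) gives only $\dim\nCM\leq\dim R/\frak a$ in general. I would first try to show that, because $M$ is equidimensional and $x$ is an f-element, the submodule $U_{M/xM}(0)$ satisfies $\dim U_{M/xM}(0)\leq \dim\nCM(M/xM)$. The reason is that the lower-dimensional associated primes of $M/xM$ arise from non Cohen-Macaulay points of $M$ along $x$. This would give $\dim\nCM(M/xM)=\dim R/\frak a(M/xM)$, and combined with the previous paragraph it closes the case $r=1$. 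For the inductive step, $M/xM$ is no longer equidimensional, so the induction cannot be applied to it verbatim. Instead, I would run the same deficiency-module computation directly for the whole sequence, iterating the exact sequences above along $x_1+\epsilon_1,\dots,x_r+\epsilon_r$ and taking $N$ to be the maximum of the finitely many thresholds that arise.
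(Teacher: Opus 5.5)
The paper gives no proof of this statement; it is quoted from \cite[Theorem 4.6]{MQS}. So your argument has to stand on its own, and as written it has a genuine gap.

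First, the claim your computation rests on is false: that $0:_{K^i(M)}y$ has dimension at most $0$ for $i\geq 1$, because an f-element of the equidimensional module $M$ is an f-element of every $K^i(M)$. An f-element only avoids $\Ass_R(M)\setminus\{\frak m\}$, whereas $\Ass_RK^i(M)=\Att_RH^i_{\frak m}(M)$. Lemma \ref{L:1}(d) forces only the attached primes of dimension exactly $i$ into $\Ass_R(M)$. Equidimensionality says nothing about the others, and avoiding them is exactly what separates sequential f-elements from f-elements (Lemma \ref{L:1a}(c)).

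For example, take $A=k[[s^4,s^3t,st^3,t^4]]$, $M=R=A[[z]]$, and the nonzerodivisor $y=s^4$. For $\frak p=\frak m_AR$ one has $\dim R/\frak p=1$, $\dim R_{\frak p}=2$ and $\depth R_{\frak p}=1$, so $K^2(R)_{\frak p}\cong K^1(R_{\frak p})\neq 0$. By Lemma \ref{L:1}(d), $K^2(R)$ has no associated prime of dimension $2$, so $\frak p\in\Ass_RK^2(R)$. Since $y\in\frak p$, we get $\dim_R(0:_{K^2(R)}y)=1$.

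For $r=1$ this is repairable. We have $\dim_R(0:_Ky)=\max\{\dim R/\frak q\mid \frak q\in\Ass_RK,\ y\in\frak q\}$, and by Krull's intersection theorem $x+\epsilon$ lies in no $\frak q$ avoided by $x$ once $N\gg0$. Also, the obstacle you name at the end is not real: $M/xM$ is again equidimensional, since $R$ is catenary and $x$ is a parameter element of $M$.

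The decisive gap, however, is the case $r\geq 2$, which you only sketch. The real difficulty is that the intermediate modules $M/(x_1+\epsilon_1,\dots,x_k+\epsilon_k)M$ depend on $\underline{\epsilon}$. So neither induction on the fixed module $M/x_1M$ nor a naive iteration of the deficiency sequences works. The iteration produces terms $0:_{K^n(M/(y_1,\dots,y_{k-1})M)}y_k$, with $y_j=x_j+\epsilon_j$, on $\underline{\epsilon}$-dependent modules. By the above, these need not have finite length, and you have no way to compare them with their unperturbed analogues. The paper's proof of Theorem \ref{T:2} carries out exactly this iteration, and it works there only because Lemma \ref{L:1a}(c) makes these terms of dimension at most $0$ for sequential f-sequences.

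For f-sequences, the natural tool is localization. If $\frak p\neq\frak m$ contains $J=I$ or $J=I(\underline{\epsilon})$, then the generating f-sequence is $M_{\frak p}$-regular (use Lemma \ref{L:1c} for $I(\underline{\epsilon})$). Hence $\nCM(M/JM)\setminus\{\frak m\}=(\nCM(M)\cap\Var(J))\setminus\{\frak m\}$. Write $\nCM(M)=\Var(\frak b)$ by Lemma \ref{L:2}(a). Your own Nakayama argument applied to $\frak b+I$ gives $\dim R/(\frak b+I(\underline{\epsilon}))\leq\dim R/(\frak b+I)$, which proves the inequality whenever $M/IM$ is not Cohen-Macaulay.

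If $M/IM$ is Cohen-Macaulay of positive dimension, a short $H^0_{\frak m}$ computation shows that an f-element $z$ of a module $L$ with $\depth L/zL>0$ is $L$-regular. Descending along the sequence, $M$ is Cohen-Macaulay and $x_1,\dots,x_r$ is $M$-regular, and Lemma \ref{L:1c} finishes. If $\dim M/IM=0$, your Nakayama argument applies again. This treats all $r$ at once and does not even use equidimensionality.
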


In this paper, we show that the dimension of non sequentially Cohen-Macaulay locus with respect to a sequential f-sequence does not increase under small pertubations. As mentioned in Remark \ref{R:1}(b), sequential f-sequences are not necessarily f-sequences; f-sequences are not necessarily sequential f-sequences. 

\begin{remark}\label{R:2} {\rm Suppose that  $0\to M'\to M\to M''\to 0$ is an exact sequence of finitely generated $R$-modules with $\dim_R(M'')\leq 0$. If either  $\dim (R/\frak a(M))>0$ or $\dim (R/\frak a(M'))>0$ then $\dim_R(M')=\dim_R(M)\geq 2$ and $\dim (R/\frak a(M'))=\dim (R/\frak a(M)).$
If $R$ be a quotient of a Cohen-Macaulay local ring then  $\dim (R/\frak a(M))\leq \dim_R(M)-1$  by Lemma \ref{L:1}(d).}
\end{remark}

The following theorem is the second main result of this paper.

\begin{theorem}\label{T:2} Assume that $R$ is a quotient of  a Cohen-Macaulay local ring and $x_1, \ldots, x_r$ is a sequential f-sequence of $M$. If $M/(x_1, \ldots, x_r)M$ is not sequentially Cohen-Macaulay then exists an integer  $N>0$ such that
$$\dim \nSCM(M/(x_1, \ldots, x_r)M) \geq \dim \nSCM(M/(x_1 + \epsilon_1, \ldots , x_r + \epsilon_r )M)$$
 for every $\epsilon_1, \ldots, \epsilon_r \in \frak m^{N}.$  
\end{theorem}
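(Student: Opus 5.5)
We will reduce to the complete case. By Lemma \ref{L:2a}(b), $\dim\nSCM$ is unchanged under completion. By Remark \ref{R:1}(a) and Proposition \ref{P:1}, being a sequential f-sequence is also unchanged. Taking $N$ larger than needed for completion poses no problem, since $\frak m^N\widehat R\cap R=\frak m^N$. So we may assume $R$ is complete, hence a quotient of a Gorenstein local ring. This lets us use Theorem \ref{T:1}(b), which gives
$$\dim\nSCM(L)=\max\Big\{\max_{i\notin D(L)}\dim_R K^i(L),\ \max_{i\in D(L)}\dim(R/\frak a(K^i(L)))\Big\}$$
for any finitely generated $L$.

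Set $\overline M=M/(x_1,\dots,x_r)M$ and $\overline M_\epsilon=M/(x_1+\epsilon_1,\dots,x_r+\epsilon_r)M$. By Proposition \ref{P:1}, for $N\gg0$ the perturbed sequence is again a sequential f-sequence. We then control the deficiency modules of both quotients by iterating the exact sequences of Lemma \ref{L:1a}(c):
$$0\to K^{i+1}(M)/xK^{i+1}(M)\to K^i(M/xM)\to (0:_{K^i(M)}x)\to0,$$
where the right-hand term has finite length for $i>1$. Inductively, for every $i\ge 2$ we expect $K^i(\overline M)$ to agree, up to finite length kernels and cokernels, with $K^{i+r}(M)/(x_1,\dots,x_r)K^{i+r}(M)$. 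The same holds for $\overline M_\epsilon$ with the perturbed ideal. The low indices $i=0,1$ need separate care, but there $\dim K^i\le1$. The hypothesis that $\overline M$ is not sequentially Cohen-Macaulay means $\dim\nSCM(\overline M)\ge0$, so contributions of dimension $\le0$ are harmless. Remark \ref{R:2} handles the passage through these finite-length errors: it says that if $\dim(R/\frak a)$ is positive it is unchanged under them, and dimensions of supports are unchanged when positive. Also $D(\overline M_\epsilon)$ and $D(\overline M)$ should coincide in degrees where it matters, because $\dim K^i=i$ is detected by the same quotients.

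For the comparison itself, we split by Theorem \ref{T:1}(b) into the two types of terms. The $q_1$-type terms are dimensions of $K^{i+r}(M)/I(\underline\epsilon)K^{i+r}(M)$. Since the $x_j$ form an f-sequence on $K^{i+r}(M)$ by Lemma \ref{L:1a}(d), these dimensions equal $\dim K^{i+r}(M)-r$ or are $\le0$, independently of $\underline\epsilon$ once $N$ is large (Lemma \ref{L:1c}). The $q_2$-type terms are $\dim R/\frak a(K^{i+r}(M)/I(\underline\epsilon)K^{i+r}(M))$, which by Lemma \ref{L:2}(b) is $\max\{\dim\nCM,\dim U(0)\}$ of that quotient. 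Here we apply the Ma--Quy--Smirnov result, Theorem \ref{P:2b}, to each module $K^{i+r}(M)$ and its f-sequence $x_1,\dots,x_r$, taking $N$ as the maximum over the finitely many $i$.

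\textbf{Main obstacle.} We expect the hardest step to be this last one, for two reasons. First, Theorem \ref{P:2b} requires equidimensionality, while $K^{i+r}(M)$ need not be equidimensional. Second, we must also bound $\dim U(0)$ of the perturbed quotient. We would try to replace $K^{i+r}(M)$ by $K^{i+r}(M)/U_{K^{i+r}(M)}(0)$, which is equidimensional since $K^{j}$ satisfies Serre-type $S_2$-ish unmixedness properties over Gorenstein quotients. The $U(0)$ part would then be handled separately: its dimension drops by $r$ under any f-sequence, perturbed or not. If this splitting is too lossy, the fallback is to mimic the proof of Theorem \ref{P:2b} directly, via the annihilator bound $\frak a(\cdot)$ and the behaviour of $\frak a$ under f-elements.
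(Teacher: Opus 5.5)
\paragraph{Verdict: genuine gap in the $q_2$ step.}
Your overall architecture matches the paper's. You pass to the complete case and use Theorem~\ref{T:1}(b). You transport everything, via the sequences of Lemma~\ref{L:1a}(c) and the Tor sequences obtained by tensoring them, to quotients of the fixed modules $K^{i+r}(M)$ by $I$ and by $I(\underline{\epsilon})$. The $q_1$-type terms you handle by a dimension count, since both sequences are f-sequences of $K^{i+r}(M)$ and hence parts of systems of parameters. Terms of dimension $\le 0$ are harmless because $\overline M$ is not sequentially Cohen--Macaulay. All of that is fine.

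The gap is exactly the step you call the main obstacle, and it is the heart of the theorem. For $K=K^{i+r}(M)$ you never actually prove
$$\dim R/\frak a(K/I(\underline{\epsilon})K)\le \dim R/\frak a(K/IK).$$
Theorem~\ref{P:2b} is the wrong tool: it controls only $\nCM$, and only for equidimensional modules. Your repair does not work as stated.

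\begin{itemize}
\item Equidimensionality of $K'=K/U_K(0)$ is automatic, with no $S_2$ property needed. The problem is that passing to $K'$ changes the quotient. $K/I(\underline{\epsilon})K$ and $K'/I(\underline{\epsilon})K'$ differ by the image of $U_K(0)/I(\underline{\epsilon})U_K(0)$, which can have positive dimension, so their non-Cohen--Macaulay loci are not directly comparable.
\item It is also unclear that an f-sequence of $K$ remains one on $K'$. The associated primes of $K'/x_1K'$ are not controlled by those of $K/x_1K$.
\item $U_{K/I(\underline{\epsilon})K}(0)$ is not the image of $U_K(0)$. Even for unmixed $K$, the quotient can acquire new positive-dimensional embedded components. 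For example, take $R=K[[a,b,c,d,e]]/(a,b)\cap(c,d)$ and $x=a+c$: then $(a,b,c,d)$ is an embedded prime of dimension $1$ in $R/xR$. So ``$\dim U(0)$ drops by $r$'' does not bound the $U(0)$-term of Lemma~\ref{L:2}(b) for the perturbed quotient.
\end{itemize}

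\paragraph{The missing idea.}
Do not split $\dim R/\frak a$ into $\nCM$ and $U(0)$; work with the annihilator ideal directly. The paper uses two facts.

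\begin{itemize}
\item \cite[Proposition 4.4]{MQS}, in the form the paper uses it: if $J$ is generated by an f-sequence of $K$ (here $\dim K/JK=i\ge 2$), then $\dim R/\frak a(K/JK)=\dim R/(J+\frak a(K))$. This is applied both to $J=I$ and to $J=I(\underline{\epsilon})$. Both are generated by f-sequences of $K^{i+r}(M)$ by Lemma~\ref{L:1a}(d) and Proposition~\ref{P:1}.
\item \cite[Lemma 1.9]{HT}, applied to the finitely many fixed ideals $\frak a(K^n(M))$. It gives one $N$ such that $\dim R/(\frak a(K^n(M))+I(\underline{\epsilon}))\le \dim R/(\frak a(K^n(M))+I)$ for all $n$ and all $\underline{\epsilon}$ with entries in $\frak m^N$.
\end{itemize}

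Together these give the required inequality between $\dim R/\frak a$ of the two quotients of $K^{i+r}(M)$, with no equidimensionality hypothesis. Your transport argument then finishes the proof. Your fallback gestures in this direction, but it identifies neither ingredient. In particular it misses the semicontinuity statement for $\dim R/(\frak a(K)+I(\underline{\epsilon}))$, which is what makes the perturbation harmless.
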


\begin{proof} By Remark \ref{R:1}(a) and Lemma \ref{L:2a}(b), we can assume that $R$ is complete.

Set $I=(x_1, \ldots , x_r)$. By Proposition \ref{P:1} and \cite[Lemma 1.9]{HT},  there exists an integer $N>0$ such that for any $r$-tuple $\underline{\epsilon}=(\epsilon_1, \ldots , \epsilon_r)$ of elements in $\frak m^N$ we have

(a) $x_1+\epsilon_1, \ldots , x_r+\epsilon_r$ is an $M$-sequential f-sequence;  

(b) $\dim \big(R/(\frak a(K^n(M))+I)\big)\geq \dim \big(R/(\frak a(K^n(M))+I(\underline{\epsilon})\big)$ for all $n$.

 Now, let  $\epsilon_1, \ldots, \epsilon_r \in \frak m^{N}.$  For each integer $1\leq k\leq r$, we set $M_k:=M/(x_1, \ldots , x_k)M$ and $M'_k:=M/(y_1, \ldots , y_k)M,$ where $y_i=x_i+\epsilon_i$ for $i\leq k$.  For each $1\leq k\leq r,$  we have by Lemma \ref{L:1a}(c) the  exact sequences
\begin{align}&0\to K^{n+1}(M_{k-1})/x_kK^{n+1}(M_{k-1})\to  K^n(M_k)\to (0:_{K^n(M_{k-1})}x_k)\to 0\\
& 0\to K^{n+1}(M'_{k-1})/y_kK^{n+1}(M'_{k-1})\to  K^n(M'_k)\to (0:_{K^n(M'_{k-1})}y_k)\to 0\end{align}
for all $n\geq 1$. Moreover, $\dim_R(0:_{K^n(M_{k-1})}x_k)\leq 0$, $\dim_R(0:_{K^n(M'_{k-1})}y_k)\leq 0$, $x_k$ is an f-element of  $K^n(M_{k-1})$ and $y_k$ is an f-element of  $K^n(M'_{k-1})$ for all $n\geq 2$.  

Set $s:=\dim\nSCM(M'_r).$ If $s\leq 0$ then $s\leq \dim\nSCM(M_r)$ as $M_r$ is not sequentially Cohen-Macaulay. So, we can assume that $s>0.$ We get by Theorem \ref{T:1} that $s=\max\{q_1, q_2\}$, where  $q_1=\underset{i\notin D(M'_r)}{\max}\dim_R(K^i(M'_r))$ and
$q_2=\underset{i\in D(M'_r)}{\max}\dim\big(R/\frak a(K^i(M'_r))\big).$  We devide into two cases.

{\it Case 1: $s=q_1$}. Then $s=\dim_RK^i(M'_r)$ for some $i\notin D(M'_r).$ Hence $i>s>0$ and hence $i\geq 2.$  So, $\dim_R(0:_{K^i(M'_{r-1})}y_r)\leq 0$. Since $s>0$, we have by exact sequence (2) for $k=r, n=i$ that $\dim_R(K^{i+1}(M'_{r-1})/y_rK^{i+1}(M'_{r-1}))=s>0.$ Since $y_r$ is an f-element of $K^{i+1}(M'_{r-1})$, it follows that  $y_r$ is a parameter element of $K^{i+1}(M'_{r-1})$. Hence $\dim_R(K^{i+1}(M'_{r-1}))=s+1$. Using exact sequence (2) consecutively for $(k,n)=(r-1,i+1), \ldots , (1,i+r-1)$ with the same arguments as in the above, we have $\dim_R(K^{i+r}(M))=s+r.$ As $x_1$ is an f-element of $K^{i+r}(M)$, it follows that  $x_1$ is a parameter element of $K^{i+r}(M)$. Hence $\dim_R(K^{i+r}(M)/x_1K^{i+r}(M))=s+(r-1)>0.$ As $\dim_R(0:_{K^{i+r-1}(M)}x_1)\leq 0$, we have by exact sequence (1) for $k=1, n=i+r-1$ that $\dim_R(K^{i+r-1}(M_1))=s+(r-1)$. Using exact sequence (1) consecutively for $(k,n)=(2,i+r-2), \ldots , (r,i)$ with the same arguments as in the above, we have $\dim_R(K^i(M_r))=s$. Since $s<i,$ we have $i\notin D(M_r).$ Therefore $\dim\nSCM(M_r)\geq s$ by Theorem \ref{T:1}.

{\it Case 2: $s=q_2$}. Then $s=\dim\big(R/\frak a(K^i(M'_r))\big)$ for some $i\in D(M'_r).$ Since $s>0$, we have by Remark \ref{R:2} that $i=\dim_R(K^i(M'_r))\geq 2.$ Hence $\dim_R(0:_{K^i(M'_{r-1})}y_r)\leq 0$. So, from the exact sequence (2) for $k=r, n=i$ with notice that $s>0$, we get by Remark \ref{R:2} that $\dim_R(K^{i+1}(M'_{r-1})/y_rK^{i+1}(M'_{r-1}))=\dim_R(K^i(M'_r))=i$ and $$\dim\big(R/\frak a(K^{i+1}(M'_{r-1})/y_rK^{i+1}(M'_{r-1}))\big)=s.$$  Because $i>0$ and  $y_r$ is an f-element of $K^{i+1}(M'_{r-1})$, it follows that $y_r$ is a parameter element of $K^{i+1}(M'_{r-1})$. Hence $\dim_R(K^{i+1}(M'_{r-1}))=i+1.$ Set $N=(0:_{K^{i+1}(M'_{r-2})}y_{r-1}).$ By applying the functor $R/y_rR\otimes_R-$ to the exact sequence (2) for $k=r-1, n=i+1$, we get the exact sequence 
\begin{align}\Tor_1^R(R/y_rR; N)\to K^{i+2}(M'_{r-2})/(y_{r-1}, y_r)K^{i+2}(M'_{r-2})&\to  K^{i+1}(M'_{r-1})/y_rK^{i+1}(M'_{r-1})\notag\\
&\to \Tor_0^R(R/y_rR; N).\notag\end{align}
Since $\dim_R(N)\leq 0$, we have  $\dim_R(\Tor_j^R(R/y_rR; N))\leq 0$ for $j=0, 1.$ As $s>0,$ we have $$\dim_R\Big(R/\frak a\big(K^{i+2}(M'_{r-2})/(y_{r-1},y_r)K^{i+2}(M'_{r-2})\big)\Big)=s$$ and $\dim_R\big(K^{i+2}(M'_{r-2})/(y_{r-1}, y_r)K^{i+2}(M'_{r-2})\big)=i.$ Since $i>0$, $y_{r-1}, y_r$ is an f-sequence of $K^{i+2}(M'_{r-2})$ by Lemma \ref{L:1a}(d), hence it is a part of a system of parameters of $K^{i+2}(M'_{r-2})$. So, $\dim_R(K^{i+2}(M'_{r-2}))=i+2.$
Applying the functor $R/(y_{k+1}, \ldots ,y_r)R\otimes_R-$ to the exact sequence (2) consecutively for $(k,n)=(r-2,i+2), \ldots , (1,i+r-1)$ with similar arguments as in the above, we have 
$$\dim_R\Big(R/\frak a\big(K^{i+r}(M)/(y_1, \ldots ,y_r)K^{i+r}(M)\big)\Big)=s$$
 and  $\dim_R(K^{i+r}(M))=i+r.$ Since $i+r\geq r+2$, we have by Lemma \ref{L:1a}(d) that $x_1, \ldots , x_r$ and $y_1, \ldots ,y_r$ are f-sequences of $K^{i+r}(M)$. So, we get by \cite[Proposition 4.4]{MQS}  that 
\begin{align}\dim\big(R/\frak a(K^{i+r}(M)/IK^{i+r}(M)))\big)&=\dim\big(R/(I+\frak a(K^{i+r}(M)))\big)\notag\\
&\geq \dim\big(R/(I(\underline{\epsilon})+\frak a(K^{i+r}(M))\big)\notag\\
&=\dim\big(R/\frak a(K^{i+r}(M)/I(\underline{\epsilon})K^{i+r}(M))\big)=s.\notag\end{align}

Set $P:=(0:_{K^{i+r-1}(M)}x_1)$. Using the functor $R/(x_2, \ldots , x_r)R\otimes_R-$ to the exact sequence (1) for $k=1, n=i+r-1$, we get the exact sequence
\begin{align}\Tor_1^R(R/(x_2, \ldots, x_r)R; P)\to K^{i+r}(M)/IK^{i+r}(M)&\to  K^{i+r-1}(M_1)/(x_2, \ldots , x_r)K^{i+r-1}(M_1)\notag\\
&\to \Tor_0^R(R/(x_2, \ldots , x_r)R; P).\notag\end{align}
Since $\dim_R(P)\leq 0$, we have  $\dim_R(\Tor_j^R(R/(x_2, \ldots , x_r)R; N'))\leq 0$ for $j=0, 1.$ Since $\dim_R\Big(R/\frak a\big(K^{i+r}(M)/IK^{i+r}(M)\big)\Big)\geq s>0$, we get by Remark \ref{R:2} that 
$$\dim_R\Big(R/\frak a\big(K^{i+r-1}(M_1)/(x_2, \ldots , x_r)K^{i+r-1}(M_1)\big)\Big)\geq s>0.$$
 Moreover, since $x_1$ is an f-element of $K^{i+r}(M)$, we get by exact sequence (1) for $k=1$ and $n=i+r-1$ that 
$$\dim_RK^{i+r-1}(M_1)=\dim_R(K^{i+r}(M)/x_1K^{i+r}(M))=\dim_R(K^{i+r}(M))-1=i+r-1.$$ By applying  the functor $R/(x_{k+1}, \ldots ,y_r)R\otimes_R-$ to the exact sequence (2) consecutively for $(k,n)=(2,i+r-2), \ldots , (r,i)$ with similar arguments as in the above, we have 
$$\dim\big(R/\frak a(K^i(M_r))\big)\geq s.$$
 Moreover, since $\dim_R(K^{i+r}(M))=i+r$, we have by exact sequence (1) that
$$\dim_R(K^i(M_r))=\dim_R(K^{i+1}(M_{r-1}))-1=\ldots =\dim_R(K^{i+r}(M))-r=i.$$ Hence $i\in D(M_r).$ Therefore,  $\dim\nSCM(M_r)\geq \dim\big(R/\frak a(K^i(M_r))\big)\geq s$ by Theorem \ref{T:1}. 
\end{proof}

As a consequence, we show that the sequential Cohen-Macaulayness with respect to a sequential sequence is preserved under small pertubations. 

\begin{corollary} \label{C:2} Assume that $R$ is a quotient of  a Cohen-Macaulay local ring and $x_1, \ldots, x_r$ is an $M$-sequential sequence. If $M/(x_1, \ldots, x_r)M$ is  sequentially Cohen-Macaulay then exists an integer  $N>0$ such that
$M/(x_1 + \epsilon_1, \ldots , x_r + \epsilon_r )M)$  is  also sequentially Cohen-Macaulay 
 for all $\epsilon_1, \ldots, \epsilon_r \in \frak m^{N}.$  
\end{corollary}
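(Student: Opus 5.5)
Since $R$ is a quotient of a Cohen-Macaulay local ring, Remark \ref{R:1}(a) and Lemma \ref{L:2a}(b) let us replace $R$, $M$ by $\widehat R$, $\widehat M$. Then $R$ is complete, hence a quotient of a Gorenstein local ring, and deficiency modules are available. Rather than go through Theorem \ref{T:2}, whose hypothesis requires $M/(x_1,\ldots,x_r)M$ to be \emph{non} sequentially Cohen-Macaulay, I would argue directly with E-depth, using Theorem \ref{L:1cbs} and Lemma \ref{L:1a}(a),(b).

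First, by Proposition \ref{P:1} choose $N>0$ such that $y_1,\ldots,y_r$, with $y_i=x_i+\epsilon_i$, is an $M$-sequential sequence for all $\epsilon_1,\ldots,\epsilon_r\in\frak m^N$. Applying Lemma \ref{L:1a}(a) $r$ times gives, for each sequential sequence $z_1,\ldots,z_r$ (either the $x$'s or the $y$'s):
\begin{itemize}
\item[{\rm (i)}] $\Defi(M/(z_1,\ldots,z_r)M)$ corresponds to the shift of $\Defi(M)$ by $r$, with $K^{i}(M/(z)M)\cong K^{i+r}(M)/(z_1,\ldots,z_r)K^{i+r}(M)$ for $i>0$;
\item[{\rm (ii)}] $\dim M/(z)M=d-r$, because sequential elements are f-elements by Remark \ref{R:1}(b), so each one drops the dimension by one when the dimension is positive.
\end{itemize}
I take these as the basic bookkeeping.

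Second, Theorem \ref{L:1cbs} states that $M/(x)M$ is sequentially Cohen-Macaulay if and only if $\E-depth(M/(x)M)=d-r$; the quotient-of-CM condition holds automatically over a complete ring. Since $x_1,\ldots,x_r$ is a sequential sequence that can be extended, Theorem \ref{L:1cbs} together with the induction in Theorem \ref{T:1bs}(a) gives $\E-depth(M)=r+\E-depth(M/(x)M)=d$. Hence $M$ itself is sequentially Cohen-Macaulay.

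Third, by Lemma \ref{L:1a}(a), sequential Cohen-Macaulayness passes from $M$ to $M/y_1M$ since $y_1$ is $M$-sequential. Inductively it passes to $M/(y_1,\ldots,y_r)M$, because each $y_k$ is sequential on $M/(y_1,\ldots,y_{k-1})M$. This gives the corollary for all $\epsilon_i\in\frak m^N$.

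The main obstacle is the lifting step: deducing from $\E-depth(M/(x)M)=\dim M/(x)M$ that $\E-depth(M)=d$. This requires the additivity $\E-depth(M)=r+\E-depth(M/(x)M)$ along a sequential sequence, which is implicit in the proof of Theorem \ref{T:1bs}(a) with $I=\frak m$. If this additivity caused trouble, I would verify it directly from Definition \ref{D:2} and (i). Concretely, $\depth K^{i+r}(M)=r+\depth K^{i}(M/(x)M)$, since $x_1,\ldots,x_r$ is a $K^{i+r}(M)$-sequence by Lemma \ref{L:1a}(b). The CM-ness of each nonzero $K^i(M/(x)M)$ of dimension $i$ then yields $\depth K^{i+r}(M)=i+r$ for $i+r\in\Defi(M)$ with $i\ge1$. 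For the indices $i\le r$ in $\Defi(M)$, I would use that $x_1,\ldots,x_i$ is a $K^i(M)$-sequence and $\dim K^i(M)\le i$. Together these give $\depth K^{i}(M)\ge\min\{i,d\}$ for all $i\in\Defi(M)$, as required.
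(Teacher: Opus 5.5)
Your proposal is correct and follows essentially the paper's route. You pass to the completion, use Proposition \ref{P:1} to keep $x_1+\epsilon_1,\ldots,x_r+\epsilon_r$ an $M$-sequential sequence, lift sequential Cohen-Macaulayness from $M/(x_1,\ldots,x_r)M$ to $M$ by showing the E-depth of $M$ is $d$, and push it down to the perturbed quotient; the paper does this last step with the same E-depth count rather than iterating Lemma \ref{L:1a}(a), which is equivalent, and the lifting you flag as the main obstacle is also immediate because the ``in particular'' clause of Lemma \ref{L:1a}(a) is already an equivalence.
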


\begin{proof} By Remark \ref{R:1}(a), we can assume that $R$ is complete. By Proposition \ref{P:1}, there exists an integer $N>0$ such that $x_1+\epsilon_1, \ldots , x_r+\epsilon_r$ is an $M$-sequential sequence for any $r$-tuple $\underline{\epsilon}=(\epsilon_1, \ldots , \epsilon_r)$ of elements in $\frak m^N$. Set $M_r=M/(x_1, \ldots , x_r)M$.  Since $M_r$ is sequentially Cohen-Macaulay, we have $d-r=\dim_R(M_r)=\E-depth_R(M_r).$ Since $x_1, \ldots, x_r$ is an $M$-sequential sequence, we have $\E-depth_R(M)=d,$ i.e. $M$ is sequentially Cohen-Macaulay. For any $\epsilon_1, \ldots, \epsilon_r \in \frak m^{N},$ since $x_1+\epsilon_1, \ldots , x_r+\epsilon_r$ is an $M$-sequential sequence, we have 
$$\E-depth_R(M/(x_1 + \epsilon_1, \ldots , x_r + \epsilon_r )M))=d-r=\dim_R(M/(x_1 + \epsilon_1, \ldots , x_r + \epsilon_r )M)),$$  i.e. $M/(x_1 + \epsilon_1, \ldots , x_r + \epsilon_r )M)$ is  sequentially Cohen-Macaulay.  
\end{proof}

Finally we give an example to clarify the results.

\begin{example} \label{E:2} {\rm Let $d\geq 3$ be an integer and  $R=K[[u_1,\ldots,u_{d+2}]]/(u_1,u_2)\cap (u_3,u_4)$, where $K[[u_1,\ldots,u_{d+2}]]$ is the formal power series ring over a field $K$. Let $f=u_1+u_3$ and $g=u_1+u_5.$  Set $\frak p=(u_1, u_2, u_3, u_4)R$ and $\frak m=(u_1,\ldots,u_{d+2})R$. Then

  (a) $\dim (R)=d$, $K^d(R)$ is Cohen-Macaulay of dimension $d$, $K^{d-1}(R)$ is Cohen-Macaulay of dimension $d-2$ and  $K^i(R)=0$ for all $i<d-1.$ Therefore, $R$ is not sequentially Cohen-Macaulay and we get by Theorem \ref{T:1} that $$\dim\nSCM(R)=\dim_RK^{d-1}(R)=d-2.$$

  (b) It is clear that $f$ is a regular element of $R$. Since $f\in \frak p$ and $\frak p\in\Att_RH^{d-1}_{\frak m}(R)$ with $\dim (R/\frak p)=d-2>0$, it follows that $f$ is not a sequential f-element of $R$. We have $K^{d-1}(R/fR)$ is is Cohen-Macaulay of dimension $d-1$, $K^{d-2}(R/fR)$ is is Cohen-Macaulay of dimension $d-2$ and $K^i(R/fR)=0$ for all $i<d-2$. Therefore, $R/fR$ is sequentially Cohen-Macaulay by \cite[Theorem 5.5]{Sch}. For any integer $N>0$, take $\epsilon=u_5^N\in\frak m^N$, then $f+\epsilon$ is a sequential element of $R$. It follows by Lemma \ref{L:1a}(a) and Theorem \ref{T:1} that $$\dim\nSCM(R/(f+\epsilon)R)=\dim\nSCM(R)-1=d-3\geq 0.$$ In particular, $R/(f+\epsilon)R$ is not sequentially Cohen-Macaulay, thus the pertubation ring $R/(f+\epsilon)R$ is, in some sense, worse than the original ring $R/fR$.

 (c) It is clear that $g$ is a regular element and a sequential element of $R$. So, it follows by Lemma \ref{L:1a}(a) and Theorem \ref{T:1} that $\dim\nSCM(R/gR)=\dim\nSCM(R)-1=d-3.$ By Theorem \ref{T:2}, there exists an integer $N>0$ such that $\dim\nSCM(R/(g+\epsilon)R)\leq d-3$ for any $\epsilon\in\frak m^N$.}
\end{example}

\end{document}